\newcommand{\MM}[0]{\mathcal{M}}
\newcommand{\CC}[0]{\mathcal{C}}
\newtheorem{theorem}{Theorem}[section]
\newtheorem{lemma}[theorem]{Lemma}
\newtheorem{proposition}[theorem]{Proposition}
\newtheorem{corollary}[theorem]{Corollary}
\newtheorem{definition}{Definition}
\theoremstyle{definition}
\newtheorem{remark}[theorem]{Remark}
\newcommand{\T}{\mathcal{T}}
\newcommand{\M}{\mathcal{M}}
\newcommand{\C}{\mathcal{C}}
\newcommand{\Q}{\mathcal{Q}}
\newcommand{\Mod}{\mathrm{Mod}}
\newcommand{\pr}{\mathrm{Proj}}
\newcommand{\hpr}{\mathrm{proj}}
\begin{document}

\title{Convex cocompactness and stability in mapping class groups}
\author[M.G. Durham]{Matthew Gentry Durham}
\address{University of Illinois at Chicago,
851 S Morgan St,
Chicago, IL 60607, U.S.A.
}
\email{\href{mailto:mdurha2@uic.edu}{mdurha2@uic.edu}}
\author[S.J. Taylor]{Samuel J. Taylor}
\address{Department of Mathematics, 
University of Texas at Austin, 
1 University Station C1200, 
Austin, TX 78712, U.S.A.
}
\email{\href{mailto:staylor@math.utexas.edu}{staylor@math.utexas.edu}}
\date{\today}
\maketitle

\begin{abstract}
We introduce a strong notion of quasiconvexity in finitely generated groups, which we call \emph{stability}. Stability agrees with quasiconvexity in hyperbolic groups and is preserved under quasi-isometry for finitely generated groups. We show that the stable subgroups of mapping class groups are precisely the convex cocompact subgroups. This generalizes a well-known result of Behrstock \cite{Be} and is related to questions asked by Farb-Mosher \cite{FMo} and Farb \cite{Farbproblems}.
\end{abstract}

\section{Introduction}
In order to understand the structure of a finitely generated group $G$, one often investigates subgroups $H \le G$ whose geometry reflects that of $G$. One successful application of this approach is to the study of quasiconvex subgroups of hyperbolic groups.  In this setting, $H$ is finitely generated and undistorted in $G$ and these properties are preserved under quasi-isometries of $G$.  Quasiconvexity, however, is not as useful for arbitrary finitely generated groups. Without hyperbolicity of $G$, quasiconvexity depends on a choice of generating set for $G$ and, in particular, is not preserved under quasi-isometry. To address this situation, we introduce the stronger notion of \emph{stability}, which agrees with quasiconvexity when $G$ is hyperbolic. Specifically, we define the following:

\begin{definition}
Let $G$ be a finitely generated group. A subgroup $H \le G$ is \emph{stable} if $H$ is undistorted in $G$ and for all $L \ge 0$ there exists an $R= R(L) \ge 0$ satisfying the following: for any pair of $L$-quasigeodesics of $G$ that share common endpoints in $H$, each is contained in the $R$-neighborhood of the other.
\end{definition}

Our primary motivation for defining stable subgroups of a finitely generated group is the mapping class group $\Mod(S)$ of a connected, orientable surface $S$. In this note, we relate stable subgroups of $\Mod(S)$ to \emph{convex cocompact subgroups} of the mapping class group, introduced by Farb and Mosher in \cite{FMo}. These are much studied subgroups of $\Mod(S)$ that have important connections to the geometry of Teichm\"{u}ller space, the curve graph, and surface group extensions. (See Section \ref{Modbackground} for definitions.) Our main result can be interpreted as a generalization of a theorem of Behrstock \cite{Be} (also see \cite{DMS}). Behrstock proves that the stable (or Morse) elements of $\Mod(S)$ are exactly the pseudo-Anosov mapping classes.

Our main theorem provides a purely group theoretic characterization of convex compactness, which does not involve the geometry of either Teichm\"{u}ller space or the curve graph. This distinguishes our characterization of convex cocompact subgroups of mapping class groups from those appearing in \cite{FMo,KL2,H}. We prove

\begin{theorem}\label{intro_main}
The subgroup $G \le \Mod(S)$ is stable if and only if it is convex cocompact.
\end{theorem}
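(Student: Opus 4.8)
The plan is to run everything through the curve graph $\C(S)$, which is Gromov hyperbolic by Masur--Minsky, and to invoke the characterization of convex cocompactness of Kent--Leininger and Hamenst\"adt \cite{KL2,H}: a finitely generated $G \le \Mod(S)$ is convex cocompact if and only if some (equivalently, every) orbit map $G \to \C(S)$ is a quasi-isometric embedding. Since $\C(S)$ is hyperbolic, such an orbit is automatically quasiconvex, hence Morse, \emph{inside} $\C(S)$; the entire difficulty is transporting this between $\C(S)$ and $\Mod(S)$. The two tools for the transport are the coarsely Lipschitz systole map $\pi\colon \Mod(S)\to\C(S)$ induced by the action, and the Masur--Minsky distance formula $d_{\Mod(S)}(x,y)\asymp\sum_{W}[d_W(x,y)]_K$ together with Behrstock's inequality governing the subsurface projections $d_W$.

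For the direction convex cocompact $\Rightarrow$ stable, undistortion comes for free: $|g|_{\Mod(S)}\lesssim |g|_G$ always, and factoring the orbit map $G\to\C(S)$ through the Lipschitz bound for $\pi$ gives $|g|_G\lesssim d_{\C(S)}(\ast,g\cdot\ast)\lesssim|g|_{\Mod(S)}$. For the Morse condition I would use that convex cocompactness keeps the orbit in the thick part of Teichm\"uller space, which by Rafi's description of $\Mod(S)$-geometry bounds every proper subsurface projection $d_W(g,h)$ uniformly for $g,h\in G$. The point is that a quasigeodesic between $g$ and $h$ cannot make a large excursion into a proper subsurface $W$: building up $d_W$ requires $\pi(\cdot)$ to stall near $\partial W$, so an excursion produces a backtrack unless simultaneous progress is made in a subsurface transverse to $W$ --- that is, unless a coarse product (flat) region separates $g$ and $h$. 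Convex cocompactness rules out such flats, so both $L$-quasigeodesics must fellow-travel the $\C(S)$-geodesic between $\pi(g)$ and $\pi(h)$ and remain in a uniform neighborhood of the orbit, hence of each other.

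For the converse, stable $\Rightarrow$ convex cocompact, I would argue the contrapositive and produce the QI-embedding into $\C(S)$ by ruling out flats directly. If the orbit map $G\to\C(S)$ fails to be a quasi-isometric embedding, then there are $g,h\in G$ with $d_{\Mod(S)}(g,h)$ large but $d_{\C(S)}(\pi(g),\pi(h))$ comparatively small, so by the distance formula the bulk of $d_{\Mod(S)}(g,h)$ is carried by projections to proper subsurfaces whose boundaries cluster along a short $\C(S)$-geodesic. Using Behrstock's inequality to organize these projections, one extracts two disjoint (or otherwise transverse) subsurfaces simultaneously carrying large projections, i.e.\ a coarse product region spanning $g$ and $h$. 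Inside such a region one builds two $L$-quasigeodesics with common endpoints $g,h$ that run along the two factors and are therefore Hausdorff-far apart, contradicting stability. Hence no such $g,h$ exist, the orbit map is a quasi-isometric embedding, and $G$ is convex cocompact.

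The main obstacle, shared by both directions, is precisely the $\Mod(S)\leftrightarrow\C(S)$ transport: curve-graph fellow-traveling does not imply $\Mod(S)$-fellow-traveling, because $\pi$ is wildly non-injective along product regions. Everything hinges on the dichotomy ``large proper projection forces either a backtrack or a flat,'' which must be made quantitative through the distance formula and Behrstock's inequality --- ruling flats out (using convex cocompactness) in the forward direction, and manufacturing them (from the failure of the QI-embedding) in the reverse direction. Turning the heuristic ``flat region yields two divergent quasigeodesics'' into a genuine violation of the Morse condition, with constants depending only on $L$, is the technical heart of the argument.
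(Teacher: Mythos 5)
Your overall strategy --- routing everything through the Kent--Leininger/Hamenst\"{a}dt characterization, using coboundedness of the orbit together with the distance formula, and manufacturing divergent quasigeodesics in product regions for the converse --- matches the paper's, but both directions have a genuine gap at exactly the step you flag as the ``technical heart.'' In the forward direction, the assertion that ``convex cocompactness rules out such flats'' conflates flats in $G$ with flats in the ambient group: an $L$-quasigeodesic between $g,h \in G$ lives in $\Mod(S)$ and is free to detour through any product region of $\Mod(S)$, about which convex cocompactness of $G$ says nothing. Moreover, the distance formula plus Behrstock's inequality only bound such an excursion \emph{linearly} in $d(g,h)$ --- an excursion of size proportional to $d(g,h)$ into a proper subsurface is perfectly compatible with being an $L$-quasigeodesic --- whereas stability demands a bound depending on $L$ alone. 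The missing ingredient is a genuine contraction property for paths between cobounded markings: the paper imports this from Duchin--Rafi (balls far from a hierarchy path between cobounded markings have uniformly bounded shadow on it) and then runs the Masur--Minsky ``contraction implies stability of quasigeodesics'' criterion. Without some such contraction statement your forward direction does not close.

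In the converse, your plan to ``extract two disjoint (or otherwise transverse) subsurfaces simultaneously carrying large projections'' is both unnecessary and not always available: failure of coboundedness hands you only \emph{one} subsurface $Y$ with $d_Y(\mu, g\cdot\mu)$ large, and nothing forces a second transverse large projection. The paper's fix is that one suffices: the hierarchy path from $\mu$ to $g\cdot\mu$ has a long subsegment inside $\Q(\partial Y)$, which is quasi-isometric to the product $\prod_{X\in\sigma(\partial Y)}\M(X)$, and this product automatically has a second infinite-diameter factor (for instance the annular factors over $\partial Y$); the divergent quasigeodesic is built by an \emph{artificial} detour of length comparable to $d_{\M(Y)}(\alpha_Y,\beta_Y)$ in that second factor, not by following a second large projection of the orbit. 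You also need the endpoints of these divergent paths to lie near $G\cdot\mu$ before they contradict stability; since the product region need not contain any orbit point, this requires first observing (by comparing hierarchy paths with orbit paths, using stability itself) that the hierarchy path, hence the subsegment's endpoints, stays uniformly close to the orbit, and then appending short arcs back to orbit points. These omissions are fixable but essential.
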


Theorem \ref{intro_main} partially answers questions appearing in \cite{FMo} and \cite{Farbproblems}. In particular, Farb and Mosher ask how their notion of convex cocompactness (which they define as having quasiconvex orbit in Teichm\"{u}ller space) is related to quasiconvexity in the mapping class group \cite{FMo}. Also, in Problem $3.8$ of \cite{Farbproblems}, Farb asks what subgroups of mapping class groups are quasiconvex with respect to fixed generating sets. Theorem \ref{intro_main} characterizes the subgroups of the mapping class group that satisfy our strong notion of quasiconvexity and implies that convex cocompact subgroups are quasiconvex in $\Mod(S)$ with respect to any generating set (Proposition \ref{anygen}). It is our hope that this notion of stability will also be useful in other finitely generated groups.\\

\noindent \textbf{Acknowledgements} \, We thank Daniel Groves, Alan Reid, and Jing Tao for helpful conversations. We are also grateful for the hospitality of both the MSRI during its Hot Topics: Surface subgroups and cube complexes workshop and the MRC program during its geometric groups theory workshop in 2013.

\section{Background}
We keep this section brief and refer the reader to \cite{BH} for background on coarse geometry and \cite{FM} for mapping class group basics. Throughout, we assume that the reader has some familiarity with subsurface projections and hierarchies for the mapping class group, as introduced in \cite{MM2}. See \cite{Mi, Be} for additional references.

\subsection{Coarse geometry}
Let $(X,d_X)$ and $(Y,d_Y)$ be metric spaces.  Recall that $f: X \to Y$ is a $K$-\emph{quasi-isometric embedding} if for all $x_1,x_2 \in X$,
$$\frac{1}{K} d_X(x_1,x_2) - K \le d_Y(f(x_1), f(x_2)) \le Kd_X(x_1,x_2) +K. $$

We remark that what we have defined is usually called a $(K,K)$-quasi-isometric embedding in the literature, but our definition will reduce the number of constants appearing throughout this note. If $f:X \to Y$ has the additional property that every point in $Y$ is within $K$ of the image $f(X)$, then $f$ is a $K$-quasi-isometry and $X$ and $Y$ are \emph{quasi-isometric}.

If $I$ is a subinterval of either $\mathbb{R}$ or $\mathbb{Z}$, then a $K$-quasi-isometric embedding $f: I \to X$ is called a $K$-\emph{quasigeodesic}. We will often refer to $f$ as a quasigeodesic and call $K$ the \emph{quasigeodesic constant} for $f$. When $f$ is an isometric embedding, it is called a \emph{geodesic}. The metric space $X$ is called \emph{geodesic} if for any $x_1,x_2 \in X$ there is a geodesic $f:[0,N] \to X$ with $f(0) = x_1$ and $f(N) = x_2$, i.e. there is a geodesic joining $x_1$ to $x_2$. We will sometimes write $[x_1,x_2]$ to denote an arbitrary geodesic joining $x_1$ and $x_2$. For any path $\gamma: I \to X$, we will continue to use the symbol $\gamma$ to denote the image of $\gamma$ in $X$, as what is meant will be clear from context.

Recall that a subset $C$ of a geodesic metric space $X$ is $K$ -\emph{quasiconvex} if for any $c_1,c_2 \in C$ and any geodesic $[c_1,c_2]$ in $X$,  $[c_1,c_2] \subset N_K(C)$. Here, $N_K(C)$ denotes the closed $K$-neighborhood of $C$. For any $\epsilon>0$, two subsets $A$ and $B$ of $X$ have \emph{Hausdorff distance no greater than} $\epsilon$ if $A \subset N_{\epsilon}(B)$ and $B \subset N_{\epsilon}(A)$. The infimum over all such $\epsilon$ is the \emph{Hausdorff distance} between $A$ and $B$, denoted by $d_{\mathrm{Haus(X)}}(A,B)$. Throughout this note, we reserve the notation $d(A,B)$ to denote the diameter of the union of $A$ and $B$. In symbols, $d(A,B) = \mathrm{diam}(A \cup B)$.

We make one further remark on notation. The expression $A \prec B$ is defined to mean that there exists a $K \ge 1$ so that $A \le K \cdot B +K$. In different contexts the constant $K$ will depend on particular parameters but not on the numbers $A$ and $B$ directly. We define $A \succ B$ similarly and write $A \asymp B$ if both $A \prec B$ and $B \prec A$. When using this notation below, we will be clear about the dependence of $K$.

\subsection{Hyperbolic geometry}\label{hypgeo}
A geodesic metric space $X$ is \emph{$\delta$-hyperbolic} if for any $x,y,z \in X$ and any geodesic segments $[x,y],[y,z],[x,z]$ joining them,  $[x,z] \subset N_{\delta}([x,y]\cup [y,z])$. That is, $X$ has $\delta$-thin triangles. In this note, we will need a few well-known properties about the nearest point retraction from a hyperbolic metric space $X$ to a quasigeodesic $\gamma$ in $X$. See \cite{BH} for additional details.

Let $\gamma:[0,N] \to X$ be a $K$-quasigeodesic. The \emph{nearest point retraction} from $X$ to $\gamma$ is a map $\bold{n} = \bold{n}_{\gamma}: X \to \mathrm{im}(\gamma)$ defined as follows: for $x\in X$, $\bold{n}(x)$ is any point in the image of $\gamma$ such that $d_X(x,\bold{n}(x)) = \min_{i \in [0,N]}d_X(x, \gamma(i))$. In the case that $X$ is $\delta$-hyperbolic, there is a $p \ge 0$ depending only on $K$ and $\delta$ such that if $\gamma(j)$ is a different point on $\gamma$ minimizing distance to $x$, then $d_X(\bold{n}(x),\gamma(j)) \le p$. Moreover, $\bold{n}(\gamma(i)) = \gamma(i)$ for any $i \in [0,N]$ and for any $x,y \in X$,
$$d_X(\bold{n}(x),\bold{n}(y)) \le p \cdot d(x,y) +p. $$

In Section \ref{ccimplies}, the nearest point retraction will be used to define a projection from the space $X$ to the domain interval of a quasigeodesic.

\subsection{Curves, markings, and hierarchy paths}\label{Modbackground}

In this section, we recall the work of Masur-Minsky on the curve and marking graphs. Fix a orientable surface $S$ with genus $g\ge0$ and $p\ge0$ punctures so that $\omega(S) = 3g-3+p \ge 1$; $\omega(S)$ is called the \emph{complexity} of $S$. The \emph{curve graph}, denoted $\C(S)$, is a locally infinite simplicial graph whose vertices are isotopy classes of essential simple closed curves on $S$ and two (isotopy classes of) curves are joined by an edge if they can be realized disjointly on $S$. The curve graph is the $1$-skeleton of a simplicial complex introduced by Harvey in \cite{Ha}.

\begin{remark}
The above definition is for $S$ with $\omega(S)\ge 2$.  When $\omega(S)=1$, the definition is modified so that $\C(S)$ is the Farey graph.  See Subsection 2.4 of \cite{MM2} for when $S$ is an annulus, i.e. $\omega(S)=-1$.
\end{remark}

Endow $\C(S)$ with the graph metric.  We frequently use the following foundational result of Masur-Minsky \cite{MM1}: 

\begin{theorem}[\cite{MM1}]
For any $S$, there is a $\delta>0$ so that $\C(S)$ is $\delta$-hyperbolic. 
\end{theorem}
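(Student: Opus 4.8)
The plan is to establish hyperbolicity of $\C(S)$ by producing a family of uniform quasigeodesics—``preferred paths''—connecting pairs of vertices, and then invoking a standard local-to-global criterion. Concretely, I would use the following criterion (essentially that of Masur--Minsky): if a geodesic metric space $X$ admits, for each pair $x,y$, a path $g_{xy}$ from $x$ to $y$ such that (i) paths between points at distance $\le 1$ have uniformly bounded diameter, and (ii) for every triple $x,y,z$ the path $g_{xz}$ lies in a uniform neighborhood of $g_{xy}\cup g_{yz}$, then $X$ is $\delta$-hyperbolic for some $\delta$ depending only on the constants, and moreover the $g_{xy}$ are uniform quasigeodesics that fellow-travel genuine geodesics. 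The problem then reduces to constructing such a family of paths in $\C(S)$ and verifying the thin-triangle condition (ii).

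First I would construct the paths using Teichm\"{u}ller geometry. Let $\TT(S)$ be the Teichm\"{u}ller space of $S$ with the Teichm\"{u}ller metric, which is uniquely geodesic. Define the coarse ``systole'' map $\Upsilon:\TT(S)\to\C(S)$ sending a conformal structure to a curve of minimal extremal length on it; comparison of extremal lengths of disjoint curves shows $\Upsilon$ is coarsely well-defined and coarsely Lipschitz. Given curves $\alpha,\beta$, I would choose points $X_\alpha, X_\beta\in\TT(S)$ on which $\alpha,\beta$ are short, let $[X_\alpha,X_\beta]$ be the Teichm\"{u}ller geodesic between them, and set $g_{\alpha\beta}=\Upsilon([X_\alpha,X_\beta])$. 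Condition (i) is then immediate from the Lipschitz bound.

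The heart of the argument is condition (ii), which I would deduce from a \emph{contraction} property of these paths: nearest-point projections in $\TT(S)$ to a Teichm\"{u}ller geodesic, pushed forward by $\Upsilon$, move nearby points only a bounded amount, so that balls in $\TT(S)$ disjoint from the geodesic have $\Upsilon$-image of bounded diameter. This in turn follows from quantitative control of how the extremal length of a fixed curve $\gamma$ evolves along a Teichm\"{u}ller geodesic: $\gamma$ can be short only on a bounded-length subinterval of the geodesic—controlled by the thick--thin decomposition together with Wolpert-type and quasiconformal distortion estimates—and off that subinterval $\Upsilon$ is insensitive to $\gamma$. Feeding the contraction estimate into the criterion yields uniformly thin triangles, and hence the desired $\delta$.

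The main obstacle is precisely this contraction estimate—the control of extremal length along Teichm\"{u}ller geodesics and the resulting bounded-projection property; everything else (well-definedness and coarse Lipschitz behavior of $\Upsilon$, and the reduction to a path criterion) is comparatively soft. I would note that an alternative route avoids Teichm\"{u}ller theory entirely: one can instead build the preferred paths combinatorially, e.g. from surgery sequences or ``unicorn'' arcs between the relevant curves, and verify conditions (i)--(ii) by elementary intersection-number arguments; this yields the same conclusion and in fact a uniform $\delta$ independent of $S$. Either way, the crux is establishing the uniformly thin-triangle (contraction) property for the chosen family of paths.
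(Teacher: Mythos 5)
This theorem is quoted from Masur--Minsky \cite{MM1} and is not proved in the present paper, and your outline is essentially the argument of that cited source: the coarse systole map $\TT(S)\to\C(S)$, preferred paths obtained by pushing forward Teichm\"{u}ller geodesics, the contraction property derived from control of extremal length along such geodesics, and the resulting path-family criterion for hyperbolicity --- the very criterion the paper itself adapts in its section on uniformly contracting paths. You correctly isolate the contraction estimate as the technical crux, and the alternative combinatorial (unicorn/surgery) route you mention is also a valid proof yielding uniform $\delta$, so there is nothing to correct.
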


A (complete clean) \emph{marking}, $\mu$, on $S$ is a pants decomposition called the \emph{base of} $\mu$, $\mathrm{base}(\mu)$, and, for each $\alpha \in \mathrm{base}(\mu)$, a \emph{transversal} $t_{\alpha}\in \C(S)$ which intersects $\alpha$ and no other base curve. The \emph{marking graph}, $\M(S)$, is a simplicial graph whose vertices are markings, with two markings connected by an edge if they differ by a Dehn (half) twist around a base curve $(\alpha, t_{\alpha}) \mapsto (\alpha, T_{\alpha}\cdot t_{\alpha})$ called a \emph{twist move}, or a \emph{flip move}, which switches a base curve and its transversal, $(\alpha, t_{\alpha}) \mapsto (t_{\alpha}, \alpha)$ (see Section 2.5 of \cite{MM2} for more details). Masur and Minsky show:

\begin{theorem}[\cite{MM2}]
$\Mod(S)$ is $\Mod(S)$-equivariantly quasi-isometric to $\M(S)$.
\end{theorem}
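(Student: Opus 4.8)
The plan is to deduce this from the \v{S}varc--Milnor lemma (the fundamental observation of geometric group theory; see \cite{BH}), which states that if a group $G$ acts properly discontinuously and cocompactly by isometries on a proper geodesic metric space $X$, then $G$ is finitely generated and, for any basepoint $x_0 \in X$, the orbit map $g \mapsto g \cdot x_0$ is a quasi-isometry $G \to X$ with respect to any word metric on $G$. Since this orbit map intertwines left multiplication on $G$ with the $G$-action on $X$, it is automatically $G$-equivariant, which supplies the equivariance asserted in the statement. First I would record that $\M(S)$, equipped with the graph metric, is a connected graph and hence a geodesic metric space, and that it is locally finite: from any marking only finitely many twist moves and the single flip move are available per base curve, so each vertex has finitely many neighbors; in particular $\M(S)$ is proper. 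The mapping class group $\Mod(S)$ acts on $\M(S)$ by simplicial automorphisms, since both the elementary moves and the adjacency relation are defined $\Mod(S)$-equivariantly, so it acts by isometries. It then remains only to verify that this action is cocompact and properly discontinuous.

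For cocompactness I would show that $\Mod(S)$ acts on the vertex set of $\M(S)$ with finitely many orbits. By the change of coordinates principle, any two pants decompositions of $S$ are related by a homeomorphism, so $\Mod(S)$ acts transitively on the set of bases of markings; and for a fixed base there are only finitely many transversal systems up to the Dehn twists about the base curves, which themselves lie in $\Mod(S)$. Hence there are finitely many $\Mod(S)$-orbits of markings. Since every point of a graph lies within distance $\tfrac{1}{2}$ of a vertex, a finite set of orbit representatives has bounded-diameter orbit image covering $\M(S)$, and the action is cocompact.

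For proper discontinuity it suffices, given cocompactness and local finiteness, to check that the stabilizer in $\Mod(S)$ of each marking is finite. This follows from the fact that a complete clean marking fills $S$: the union of the base curves with their transversals cuts $S$ into disks and once-punctured disks, so by the Alexander method a mapping class preserving this filling system (together with its combinatorial structure) has finite order, whence the stabilizer is finite. With the action now verified to be properly discontinuous, cocompact, and by isometries on the proper geodesic space $\M(S)$, the \v{S}varc--Milnor lemma yields that $\Mod(S)$ is finitely generated and that the orbit map is a $\Mod(S)$-equivariant quasi-isometry. The main obstacle is the verification of the two finiteness statements---cocompactness via the change-of-coordinates count and, especially, the finiteness of marking stabilizers via the filling property---since these are where the topology of the surface genuinely enters; the \v{S}varc--Milnor step itself is then a black box.
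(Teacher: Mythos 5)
Your proof is correct and follows the standard route: the paper offers no argument for this statement, citing \cite{MM2}, and the proof there is exactly your \v{S}varc--Milnor setup (local finiteness of $\M(S)$, cocompactness via change of coordinates plus finitely many transversal systems, and finite stabilizers because a complete clean marking fills $S$). The one ingredient you assert rather than justify is the connectivity of $\M(S)$ --- that any two complete clean markings are joined by a chain of twist and flip moves --- which is itself a nontrivial lemma of \cite{MM2} and is needed both for the graph metric to be finite-valued and for $\M(S)$ to be a geodesic space to which \v{S}varc--Milnor applies.
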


Often we want to compare two curves or markings on a subsurface.  For any curve $\alpha \in \C(S)$ and nonannular subsurface $Y \subset S$, the \emph{subsurface projection of $\alpha$ to $Y$} is the subset $\pi_Y(\alpha) \subset \C(Y)$ obtained by restricting $\alpha$ to $Y$ and completing the resulting arcs to curves along $\partial Y$ in a natural way (see Section 2.3 of \cite{MM2} for more details and the definition when $Y$ is an annulus). In the case of a marking $\mu \in \M(S)$, one projects only the base, that is $\pi_Y(\mu) = \pi_{Y}(\textrm{base}(\mu))$.  For $\mu_1, \mu_2 \in \M(S)$, we write $d_{Y}(\mu_1, \mu_2) = diam_{\C(Y)}(\pi_Y(\mu_1) \cup \pi_Y(\mu_2))$.

One of the main constructions from \cite{MM2} is the hierarchy machinery, from which we need only a few features of the induced hierarchy paths (see Section 4 of \cite{MM2}). Given two markings $\mu_1, \mu_2 \in \M(S)$, a \emph{hierarchy} $J$ between $\mu_1$ and $\mu_2$ is a collections of geodesics in various subsurface curve graphs whose interrelations encode the combinatorial relationship between $\mu_1$ and $\mu_2$. 

For any $A\ge0$ we call a subsurface $Y \subseteq S$ (possible $Y=S$) an $A$-\emph{large link} for $\mu_1$ and $\mu_2$ if $d_Y(\mu_1, \mu_2) \ge A$.  The following theorem says that the distance between any two markings is coarsely determined by the large links between them:

\begin{theorem}[The distance formula; Theorem 6.12 in \cite{MM2}]\label{distance}
There is a constant $A_0 \ge 0$, depending only on $S$, so that for all $A \ge A_0$ there exists $K\ge 1$ such that for any pair of markings $\mu_1, \mu_2 \in \M(S)$, we have
\[\frac{1}{K} \cdot d_{\M(S)}(\mu_1, \mu_2) - K \le \sum_{Y \subseteq S} \left[d_Y(\mu_1, \mu_2)\right]_A \le  K \cdot d_{\M(S)}(\mu_1, \mu_2)+K\]
where $[X]_A = X$ if $X\ge A$ and $0$ otherwise.
\end{theorem}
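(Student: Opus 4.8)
The statement is the Masur--Minsky distance formula, so the plan is to prove the two coarse inequalities separately, using the hierarchy machinery of \cite{MM2} that we are assuming familiarity with. Write $n = d_{\M(S)}(\mu_1,\mu_2)$ and $\sigma = \sum_{Y\subseteq S}[d_Y(\mu_1,\mu_2)]_A$; the goal is to establish $\sigma \prec n$ and $n \prec \sigma$ with the threshold $A$ chosen suitably large.

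For the upper bound on the sum, $\sigma \prec n$, the plan is to exploit that subsurface projection is coarsely Lipschitz along elementary moves. First I would record that there is a uniform $C_0$ so that if two markings $m,m'$ differ by a single twist or flip move then $d_Y(m,m')\le C_0$ for every subsurface $Y$; this is immediate from the definitions, since a twist alters only the annular data at one base curve together with projections to subsurfaces containing it, and a flip exchanges a base curve with a nearby transversal. Fixing a geodesic $\mu_1=m_0,\dots,m_n=\mu_2$ in $\M(S)$, the triangle inequality gives $d_Y(\mu_1,\mu_2)\le \sum_i d_Y(m_i,m_{i+1})$ for each $Y$, so summing over the domains with $d_Y(\mu_1,\mu_2)\ge A$ yields $\sigma \le \sum_i \left( \sum_{Y} d_Y(m_i,m_{i+1}) \right)$, where the inner sum ranges over those large-projection domains. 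The remaining point---and the one place this direction is not purely formal---is to bound the inner sum over a single move by a constant. I would handle this using the ordering of large-projection domains along the geodesic (equivalently, a Behrstock-type inequality), which forces that only boundedly many domains can be simultaneously far from both their initial and terminal projections at a given step; this prevents the threshold-crossing contributions from accumulating and gives $\sigma \prec n$.

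For the lower bound on the sum, $n \prec \sigma$, which I expect to be the hard direction, the plan is constructive. I would build a hierarchy $H$ between $\mu_1$ and $\mu_2$ and then resolve it into an actual path of markings $\mu_1 = \nu_0,\dots,\nu_m=\mu_2$ in $\M(S)$. Since this is a genuine path, $n \le m$, so it suffices to bound its length $m$ by $\sigma$. Here I would invoke two structural facts from the hierarchy theory: the length of a resolution is comparable to the total geodesic length $\sum_{h\in H}|h|$ of the hierarchy; and the \emph{Large Links Lemma}, which says that a subsurface $Y$ supports a geodesic $h$ of $H$ precisely when $d_Y(\mu_1,\mu_2)$ is large, with $|h|\asymp d_Y(\mu_1,\mu_2)$. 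Combining these gives $m \asymp \sum_{h\in H}|h| \asymp \sum_Y [d_Y(\mu_1,\mu_2)]_A = \sigma$, hence $n \prec \sigma$.

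The main obstacle is entirely contained in this second direction: the existence of hierarchies, their resolution into marking paths whose length is controlled by total geodesic length, and the Large Links Lemma identifying the geodesics of $H$ with the large-projection domains and matching their lengths. These are the central technical results of \cite{MM2}, and in a proof from scratch essentially all of the difficulty lives there; by contrast the first direction is comparatively soft once the coarse Lipschitz property and the ordering of large domains are in hand.
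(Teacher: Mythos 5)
This statement is not proven in the paper: it is the Masur--Minsky distance formula, quoted as Theorem 6.12 of \cite{MM2} and used as a black box, so there is no internal argument to compare yours against. Your outline does reproduce the strategy of the original source: the bound $d_{\M(S)}(\mu_1,\mu_2) \prec \sum_Y [d_Y(\mu_1,\mu_2)]_A$ via constructing a hierarchy, resolving it into a path of elementary moves whose length is comparable to the total geodesic length, and identifying that total with the sum via the Large Links Lemma; and the reverse bound via the coarse Lipschitz behavior of subsurface projections under elementary moves together with the bounded multiplicity of ``active'' domains coming from the time-ordering/Behrstock inequality. One bookkeeping point in your upper-bound direction does not work as literally written: after applying the triangle inequality and swapping the order of summation, the quantity $\sum_Y d_Y(m_i,m_{i+1})$, summed over all domains $Y$ with $d_Y(\mu_1,\mu_2)\ge A$, is \emph{not} bounded by a constant for a fixed move $i$ --- the number of such domains grows with $d_{\M(S)}(\mu_1,\mu_2)$, and almost all of them contribute a nonzero amount at every step, since $d_Y(m_i,m_{i+1})$ is defined as a diameter and already includes $\mathrm{diam}_{\C(Y)}(\pi_Y(m_i))$. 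The correct bookkeeping is the charging argument your next sentence gestures at: for each large domain $Y$ one counts the indices $i$ at which $\pi_Y(m_i)$ is far from both $\pi_Y(\mu_1)$ and $\pi_Y(\mu_2)$, shows there are at least a definite fraction of $d_Y(\mu_1,\mu_2)$ of them, and shows that each index is so charged by only boundedly many domains (overlapping pairs are excluded by the ordering, and pairwise non-overlapping families have cardinality bounded by the complexity of $S$). With that restructuring, your sketch is a faithful summary of the standard proof, with the genuinely hard content (existence of hierarchies, resolutions, Large Links) correctly isolated and deferred to \cite{MM2}.
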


By contrast, we say that two markings $\mu_1, \mu_2 \in \M(S)$ are \emph{$E$-cobounded} if $d_Y(\mu_1, \mu_2) \le E$ for every proper subsurface $Y \subsetneqq S$.  More generally, we say that a collection of markings $M \subset \M(S)$ is $E$-cobounded if every pair of markings in $M$ is $E$-cobounded.  Coboundedness is a strong condition and paths between cobounded markings have hyperbolic behavior, a central idea in Section \ref{ccimplies}.

Though hierarchies are technical objects with many applications, for this note their utility lies in their ability to be built into \emph{hierarchy paths}. We collect some properties of hierarchy paths in the following theorem:

\begin{theorem}\label{hier qg}
There are $M, M_1, M_2 \ge0$ depending only on $S$, such that for any $\mu_1, \mu_2 \in \M(S)$, the following hold:
\begin{enumerate}
\item There is a hierarchy path $H:[0,N] \to \M(S)$ with $H(0) = \mu_1$ and $H(N)= \mu_2$, and every hierarchy path is an $M$-quasigeodesic.\label{hier qg  1}
\item For each $Y \subseteq S$, the projection of the hierarchy path $H$ to $\C(Y)$ via subsurface projection is an unparameterized quasigeodesic with uniform constants. \label{hier qg 2}
\item If $d_Y(\mu_1, \mu_2) \ge M_1$, then the set of markings in $H$ whose bases contain $\partial Y$ is a contiguous subpath denoted $H_Y$. Further, if $\alpha_Y$ and $\beta_Y$ denote the initial and terminal markings of $H_Y$, respectively, then
$$d_Y(\alpha_Y,\beta_Y) \ge d_Y(\mu_1,\mu_2) - 2M_2. $$
\label{hier qg  3}
\item For any $E>0$ there is an $E'>0$ depending only on $E$ and $S$ such that if $\mu_1, \mu_2$ are $E$-cobounded, $H$ is a hierarchy path between them, and $\mu_1', \mu_2' \in H$, then $\mu_1'$ and $\mu_2'$ are $E'$-cobounded. \label{hier qg  4}
\end{enumerate}
\end{theorem}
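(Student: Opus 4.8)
The plan is to read off parts (1)--(3) directly from the hierarchy machinery of Masur--Minsky \cite{MM2} and to deduce part (4) from part (2) together with the fact that the subsurface curve graphs are geodesic metric spaces. Parts (1)--(3) are essentially repackagings of existing results, so the real work there is in identifying the correct statements and tracking that every constant depends only on $S$; part (4) is the one place where a short self-contained argument is required, and I would present it in full.

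For (1), the existence of a hierarchy $J$ joining $\mu_1$ and $\mu_2$ together with its resolution into a path $H$ in $\M(S)$ is the content of Section 4 of \cite{MM2}, while the $M$-quasigeodesic property is a consequence of the distance formula (Theorem \ref{distance}): a resolution advances through markings at unit speed and realizes each large link, so its length is coarsely $\sum_{Y} [d_Y(\mu_1,\mu_2)]_A$, which is comparable to $d_{\M(S)}(\mu_1,\mu_2)$. For (2), I would use that the component geodesics comprising the hierarchy shadow the subsurface projections, and that in each $\C(Y)$ the resolution path fellow-travels the corresponding hierarchy geodesic; this is exactly the behavior recorded in \cite{MM2} and yields uniform unparameterized quasigeodesic constants.

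For (3), I would invoke the large link lemma of \cite{MM2}: once $d_Y(\mu_1,\mu_2)$ exceeds a threshold $M_1$, the subsurface $Y$ supports a geodesic $g_Y$ in the hierarchy whose initial and terminal vertices agree with $\pi_Y(\mu_1)$ and $\pi_Y(\mu_2)$ up to an additive error $M_2$, which gives $d_Y(\alpha_Y,\beta_Y) \ge d_Y(\mu_1,\mu_2) - 2M_2$. The contiguity of $H_Y$ follows from the structure of the resolution: the resolution can only twist about or flip curves of a marking after $\partial Y$ has been introduced into the base in order to ``work inside'' $Y$, and it does so over the single block of time dual to $g_Y$. I expect this to be the main obstacle, since it requires unwinding the time-ordering of domains and the resolution algorithm of \cite{MM2} carefully enough to guarantee both the contiguity of $H_Y$ and the uniformity of the error constants $M_1, M_2$.

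Finally, for (4), fix a proper subsurface $Y \subsetneqq S$ and consider the projected path $\pi_Y \circ H$. By part (2) this is an unparameterized $K$-quasigeodesic in $\C(Y)$ with $K = K(S)$, and its endpoints $\pi_Y(\mu_1), \pi_Y(\mu_2)$ satisfy $d_Y(\mu_1,\mu_2) \le E$ by coboundedness. A $K$-quasigeodesic joining two points at distance at most $E$ has parameter length at most $K(E+K)$, so every point of its image lies within $K^2(E+K)+K$ of an endpoint; hence its image has diameter bounded by a function of $K$ and $E$ alone. Since the reparameterizing map is coarsely onto, the same bound (up to an additive constant depending only on $S$) holds for $\pi_Y(H)$, so $d_Y(\mu_1',\mu_2') \le \mathrm{diam}(\pi_Y(H)) \le E'$ for any $\mu_1',\mu_2' \in H$, with $E' = E'(E,S)$. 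As $Y$ was an arbitrary proper subsurface, $\mu_1'$ and $\mu_2'$ are $E'$-cobounded, which completes the argument.
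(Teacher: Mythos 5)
Your proposal is correct and takes essentially the same route as the paper, which likewise disposes of (1)--(3) by citation (Theorem 6.10 of \cite{MM2} for (1), the hierarchy/resolution construction for (2), and Lemma 5.16 of \cite{Mi} for (3)) and then derives (4) from the earlier parts. Your self-contained argument for (4) --- bounding the diameter of the image of an unparameterized quasigeodesic whose endpoints are at distance at most $E$ --- is sound and in fact uses only part (2), a slight streamlining of the paper's remark that (4) follows from (2) and (3).
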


\begin{remark}
The above theorem essentially follows from the work in \cite{MM2}, with (\ref{hier qg 1}) being [Theorem 6.10, \cite{MM2}], (\ref{hier qg 2}) following from the construction, and (\ref{hier qg 3}) a consequence of [Lemma 5.16, \cite{Mi}].   Part (\ref{hier qg 4}) follows from (\ref{hier qg 2}) and (\ref{hier qg 3}). These statements also appear in \cite{BMMII}.
\end{remark}

\begin{remark}[Hierarchy paths between cobounded markings]
If two markings $\mu_1, \mu_2 \in \M(S)$ are $E$-cobounded, then Theorem \ref{distance} implies that $d_{\MM(S)}(\mu_1, \mu_2) \asymp d_{\CC(S)}(\mu_1, \mu_2)$.  It follows then from Theorem \ref{hier qg} (\ref{hier qg 2}) that the projection to $\CC(S)$ of any hierarchy path between $\mu_1$ and $\mu_2$ is a genuine quasigeodesic.  See Section \ref{ccimplies} below.
\end{remark}

\subsection{Convex cocompactness in $\Mod(S)$}
Convex cocompact subgroups of mapping class groups were introduced by Farb and Mosher in \cite{FMo}. A finitely generated $G \le \Mod(S)$ is \emph{convex cocompact} if for some $x \in \T(S)$ the orbit $G \cdot x$ is quasiconvex with respect to the Teichm\"{u}ller metric on $\T(S)$. Farb-Mosher verify that convex cocompactness is independent of the chosen $x \in \T(S)$ and relate convex cocompact subgroups of mapping class groups to hyperbolic extensions of surfaces groups. See \cite{FMo, H} for details.

Kent-Leininger \cite{KL2} and, independently, Hamenst\"{a}dt \cite{H} gave a characterization of convex cocompactness in terms of the curve graph $\C(S)$: 

\begin{theorem}[Kent-Leininger, Hamenst\"{a}dt] \label{cc}
Let $G \le \Mod(S)$ be finitely generated. Then $G$ is convex cocompact if and only if some (any) orbit map $G \to \C(S)$ is a quasi-isometric embedding.
\end{theorem}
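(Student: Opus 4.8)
The plan is to run everything through the coarsely Lipschitz, $\Mod(S)$-equivariant \emph{systole map} $\sigma\colon \T(S)\to \C(S)$, which sends a point $X$ to a shortest curve on $X$. Two inputs from Teichm\"uller theory drive the argument. First, by Masur--Minsky $\sigma$ sends every Teichm\"uller geodesic to an unparameterized quasigeodesic of $\C(S)$, with constants depending only on $S$. Second, by Rafi's thick--cobounded dictionary (building on Masur--Minsky), a Teichm\"uller geodesic lies in the $\epsilon$-thick part of $\T(S)$ if and only if its subsurface projections are uniformly bounded (it is \emph{cobounded}) if and only if $\sigma$ restricted to it is a \emph{parameterized} quasi-isometric embedding, all constants depending only on $\epsilon$ and $S$. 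Fixing $x\in\T(S)$ and setting $\alpha_0=\sigma(x)$, equivariance gives $\sigma(gx)=g\alpha_0$, so the orbit map $g\mapsto g\alpha_0$ of Theorem \ref{cc} is exactly $g\mapsto \sigma(gx)$.

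For the forward direction I assume $G$ is convex cocompact, so $G\cdot x$ is quasiconvex in $\T(S)$. Since hyperbolic length, and hence the systole function, is $\Mod(S)$-invariant, any set on which $G$ acts cocompactly lies in the $\epsilon$-thick part, where $\epsilon$ is the minimal systole over a compact fundamental domain; applying this to the (coarsely $G$-invariant) quasiconvex hull $\mathcal{H}$ of the orbit, every geodesic $[gx,hx]$ lies in $\mathcal{H}$ and so is uniformly thick. By the Milnor--\v{S}varc lemma applied to the cocompact action on $\mathcal{H}$, the orbit map is undistorted, so $d_G(g,h)\asymp d_{\T(S)}(gx,hx)$, and the latter is the length of $[gx,hx]$. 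Rafi's criterion makes $\sigma$ a uniform parameterized quasi-isometric embedding on this thick geodesic, so its length is comparable to $d_{\C(S)}(g\alpha_0,h\alpha_0)$. Chaining these comparisons gives $d_{\C(S)}(g\alpha_0,h\alpha_0)\asymp d_G(g,h)$, i.e.\ a quasi-isometric embedding.

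For the converse I assume $g\mapsto g\alpha_0$ is a quasi-isometric embedding. Undistortion in $\T(S)$ is immediate by squeezing: $\sigma$ coarsely Lipschitz gives $d_{\C(S)}(g\alpha_0,h\alpha_0)\prec d_{\T(S)}(gx,hx)$, the isometric action of the finitely generated $G$ gives $d_{\T(S)}(gx,hx)\prec d_G(g,h)$, and the hypothesis gives $d_G(g,h)\prec d_{\C(S)}(g\alpha_0,h\alpha_0)$; hence $d_{\T(S)}(gx,hx)\asymp d_G(g,h)$. Next I upgrade this to \emph{coboundedness} of the geodesics between orbit points. The geodesic $[gx,hx]$ has Teichm\"uller length $d_{\T(S)}(gx,hx)\asymp d_{\C(S)}(g\alpha_0,h\alpha_0)$, the distance between the endpoints of its shadow $\sigma([gx,hx])$; since the shadow is an unparameterized quasigeodesic (so does not backtrack) whose total length is already comparable to the distance between its endpoints, no subsegment can be $\sigma$-unproductive, and therefore $\sigma$ is a parameterized quasi-isometric embedding on $[gx,hx]$. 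By Rafi's criterion $[gx,hx]$ is uniformly cobounded, hence uniformly thick.

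The final and hardest step is to convert thickness into genuine quasiconvexity in the Teichm\"uller metric. Here I invoke the deep input that uniformly thick (cobounded) Teichm\"uller geodesics are uniformly contracting, hence uniformly Morse (Minsky's product-region estimates together with the contraction theory of Masur--Minsky and Rafi). Undistortion lets me replace a word geodesic from $g$ to $h$ by an orbit path through the points $w_ix$, which is a uniform quasigeodesic in $\T(S)$ with the same endpoints as $[gx,hx]$; the Morse property then forces this quasigeodesic and the geodesic to have uniformly bounded Hausdorff distance, so $[gx,hx]\subset N_R(G\cdot x)$ with $R$ independent of $g,h$. This is exactly the $R$-quasiconvexity of $G\cdot x$, i.e.\ convex cocompactness. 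I expect this last step to be the main obstacle: every comparison before it is bookkeeping with the systole map and the thick/cobounded dictionary, whereas passing from coarse control in $\C(S)$ to honest quasiconvexity in $\T(S)$ depends essentially on the contraction property of thick geodesics, which is the substantive geometric input.
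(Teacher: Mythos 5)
First, a point of order: the paper does not prove Theorem \ref{cc}; it quotes it from Kent--Leininger \cite{KL2} and Hamenst\"{a}dt \cite{H}, so there is no internal proof to compare against. Your overall strategy (systole map, the thick/cobounded dictionary, contraction of thick Teichm\"{u}ller geodesics) is the right skeleton and matches the cited arguments in spirit. The forward direction is essentially sound, though you could streamline it: since each $g$ acts on $\T(S)$ by isometries preserving the systole function, every orbit point is exactly as thick as $x$, and quasiconvexity already places $[gx,hx]$ in a bounded, hence uniformly thick, neighborhood of the orbit --- no appeal to a cocompact action on the hull is needed there. Undistortion of the orbit map into $\T(S)$ does require the Farb--Mosher input you are implicitly using.

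The genuine gap is in the converse, at the step where you assert that because $d_{\T(S)}(gx,hx)\asymp d_{\C(S)}(g\alpha_0,h\alpha_0)$ and the shadow is an unparameterized quasigeodesic, ``no subsegment can be $\sigma$-unproductive,'' hence $\sigma$ is a parameterized quasi-isometric embedding on $[gx,hx]$. This does not follow. Let $\lambda$ be the multiplicative constant in $L:=d_{\T(S)}(gx,hx)\le \lambda D+\lambda$ with $D:=d_{\C(S)}(g\alpha_0,h\alpha_0)$, and let $c$ be the coarse Lipschitz constant of $\sigma$. Splitting $[gx,hx]$ into three pieces of Teichm\"{u}ller lengths $\ell_1,\ell,\ell_2$ whose middle piece makes $\C(S)$-progress $d$, the endpoint comparison and the triangle inequality give only $\ell\le(\lambda c-1)(\ell_1+\ell_2)+\lambda d+O(1)$; since $\lambda c>1$, a subsegment whose length is a definite fraction of $L$ can be entirely unproductive (for instance a long excursion into the thin part of a fixed subsurface) without contradicting anything you have established for the pair $(g,h)$. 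Ruling this out --- that is, showing the orbit has uniformly bounded subsurface projections so that Rafi's criterion applies --- is precisely the substantive content of the proof of Theorem 7.4 of \cite{KL2} (quoted in Lemma \ref{cobounded} of this paper), and needs further input such as the bounded geodesic image theorem combined with quasiconvexity of the $G$-orbit in $\C(S)$, or the limit-set analysis of \cite{KL2,H}. Once uniform coboundedness is in hand, your concluding step (thick $\Rightarrow$ contracting $\Rightarrow$ Morse $\Rightarrow$ quasiconvex orbit) is indeed the correct way to finish.
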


Our main goal in this note is to provide a characterization of convex cocompactness in $\Mod(S)$ that uses only the geometry of $\Mod(S)$ itself, and neither Teichm\"{u}ller space nor the curve graph. This geometric characterization leads us to define the notion of \emph{stability}, which is defined for arbitrary finitely generated groups.

\section{Stability}
In this section, we define stability and provide some basic properties. Informally, a quasi-isometrically embedded subspace is stable if all quasigeodesics beginning and ending in the space are forced to fellow travel. This strong notion of convexity forces hyperbolic-like behavior around the subspace.

\begin{definition} \label{def: stab}
Let $f: Y \to X$ be a quasi-isometric embedding between geodesic metric spaces. We say $Y$ is \emph{stable} in $X$ if for any $L \ge0 $ there is a $R= R(L) \ge 0$ so that if $\gamma: [a,b] \to X$ and $\gamma': [a',b'] \to X$ are $L$-quasigeodesics with $\gamma(a) = \gamma'(a') \in f(Y)$ and $\gamma(b)= \gamma'(b') \in f(Y)$, then
$$d_{\mathrm{Haus}}(\gamma, \gamma') \le R. $$
\end{definition}

Note that when we say $Y$ is stable in $X$ we mean that $Y$ is stable in $X$ with respect to a particular quasi-isometric embedding $Y \to X$. Such a quasi-isometric embedding will always be clear from context, e.g., an undistorted subgroup $H$ of a finitely generated group $G$.

\begin{remark}\label{stab implies qc}
The condition that $f: Y \to X$ is a $K$-quasi-isometric embedding for some $K \ge 1$ implies that  if $\gamma$ is an $L$-quasigeodesic that begins and ends on the image of $Y$ then it remains within an $R'$-neighborhood of $f(Y)$ where $R' = R(\max\{K,L \})$. In particular, $f(Y)$ is quasiconvex in $X$. To see these statements, note that if we let $\sigma$ be a geodesic in $Y$ whose end points map under $f$ to the end points of $\gamma$, then $f(\sigma)$ is a $K$-quasigeodesic and $d_{\mathrm{Haus}}(\gamma,\sigma) \le R'$.
\end{remark}

It is well-known that when $X$ is $\delta$-hyperbolic, the preimage of a quasiconvex subspace through a quasi-isometric embedding is itself quasiconvex. This property, however, fails when the space $X$ is not hyperbolic. An important property of stability is that it is preserved under quasi-isometric embeddings. This will be especially important when characterizing stable subgroups of mapping class groups.

\begin{proposition}\label{preserved}
Suppose that $X,Y,Z$ are geodesic metric spaces and $X \to Y \to Z$ are quasi-isometric embeddings. If $X$ is stable in $Z$, then $X$ is stable in $Y$.
\end{proposition}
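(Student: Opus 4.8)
The plan is to transfer the fellow-traveling condition from $Z$ back to $Y$ using the outer quasi-isometric embedding. Write $\phi: X \to Y$ and $\psi: Y \to Z$ for the two given embeddings, and suppose $\psi$ is a $K$-quasi-isometric embedding. The composition $\psi \circ \phi: X \to Z$ is again a quasi-isometric embedding, and by hypothesis $X$ is stable in $Z$ with respect to this composition. Since $\phi$ is already assumed to be a quasi-isometric embedding, only the quasigeodesic fellow-traveling clause of Definition \ref{def: stab} remains to be checked.

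First I would record two elementary facts about the $K$-quasi-isometric embedding $\psi$. (i) If $\gamma: [a,b] \to Y$ is an $L$-quasigeodesic, then $\psi \circ \gamma$ is an $L'$-quasigeodesic in $Z$ with $L'$ depending only on $L$ and $K$; this is just the statement that a composition of quasi-isometric embeddings is a quasi-isometric embedding, with the constant obtained by chaining the two-sided inequalities. (ii) For $a, b \in Y$ the lower inequality $\frac{1}{K} d_Y(a,b) - K \le d_Z(\psi(a),\psi(b))$ rearranges to $d_Y(a,b) \le K \cdot d_Z(\psi(a),\psi(b)) + K^2$, so that a Hausdorff bound $d_{\mathrm{Haus}(Z)}(\psi(A), \psi(B)) \le R$ for subsets $A, B \subseteq Y$ upgrades to $d_{\mathrm{Haus}(Y)}(A,B) \le KR + K^2$.

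Now, given $L$-quasigeodesics $\gamma: [a,b] \to Y$ and $\gamma': [a',b'] \to Y$ with common endpoints in $\phi(X)$, I would push them forward into $Z$. By (i), $\psi \circ \gamma$ and $\psi \circ \gamma'$ are $L'$-quasigeodesics, and their endpoints lie in $\psi(\phi(X)) = (\psi \circ \phi)(X)$, the image of $X$ under the composed embedding. Invoking stability of $X$ in $Z$ with the constant $L'$ produces $R = R(L')$ with $d_{\mathrm{Haus}(Z)}(\psi \circ \gamma, \psi \circ \gamma') \le R$. Pulling this bound back via (ii) gives $d_{\mathrm{Haus}(Y)}(\gamma, \gamma') \le KR + K^2$, which depends only on $L$ (through $L'$) and $K$. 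Setting $R_Y(L) = K R(L') + K^2$ then verifies the condition.

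I expect the argument to be essentially routine; the only points demanding care are the bookkeeping of quasi-isometry constants and the observation that stability in $Z$ may only be applied to quasigeodesics whose endpoints lie in $(\psi \circ \phi)(X)$ — which is precisely why the identity $\psi(\phi(X)) = (\psi \circ \phi)(X)$ is invoked. It is worth noting that the proof uses neither stability of $X$ in $Y$ in any prior form nor any hyperbolicity of $Y$ or $Z$: it is purely a transfer along $\psi$. Consequently no hypothesis on the inner embedding $\phi$ beyond its being a quasi-isometric embedding is needed.
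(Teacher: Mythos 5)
Your proof is correct and follows essentially the same route as the paper's: push the two quasigeodesics forward along the outer quasi-isometric embedding, apply stability of $X$ in $Z$ to get a Hausdorff bound $R$ there, and pull the bound back using the lower quasi-isometry inequality to obtain a bound of the form $K(R+K)$ in $Y$. The constant bookkeeping matches the paper's as well, so there is nothing to add.
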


\begin{proof}
Let $\gamma_1$ and $\gamma_2$ be $L$-quasigeodesics in $Y$ which share endpoints in $X$. If $f:Y \to Z$ is a $K$-quasi-isometric embedding, then $f(\gamma_1), f(\gamma_2)$ are $L'$-quasigeodesics in $Z$ that share endpoints in $X$, where $L'$ depends only on $L$ and $K$. By stability of $X$ in $Z$, these quasigeodesics remain within an $R$-neighborhood of one another, for $R$ depending on $L'$. We conclude that $\gamma_1$ and $\gamma_2$ have Hausdorff distance no greater than $K(R+K)$. Since these constants depend only on $K$ and $L$, this completes the proof.
\end{proof}

\begin{lemma}
If $Y$ is stable in $X$ then $Y$ is $\delta$-hyperbolic for some $\delta \ge 0$.
\end{lemma}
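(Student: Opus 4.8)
The plan is to convert the hypothesis into an intrinsic statement about $Y$ alone, and then appeal to the classical fact that uniform stability of quasigeodesics characterizes hyperbolicity.

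First I would show that $Y$ is stable in itself. Factor the given quasi-isometric embedding $f\colon Y \to X$ as $Y \xrightarrow{\id} Y \xrightarrow{f} X$; both arrows are quasi-isometric embeddings, and $Y$ is stable in $X$ by hypothesis. Applying Proposition~\ref{preserved} with $(X,Y,Z)=(Y,Y,X)$ gives that $Y$ is stable in $Y$ with respect to $\id$. Reading Definition~\ref{def: stab} in this case, the requirement that endpoints lie in $\id(Y)$ is automatic, so we conclude: for every $L \ge 0$ there is $R=R(L)\ge 0$ such that any two $L$-quasigeodesics in $Y$ with the same pair of endpoints have Hausdorff distance at most $R$. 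This is exactly the Morse property (uniform stability of quasigeodesics) for the geodesic space $Y$.

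It then remains to prove that a geodesic metric space $Y$ enjoying the Morse property is $\delta$-hyperbolic for some $\delta$, which is the converse to the usual Morse stability lemma and carries the real content. The target is to show that geodesic triangles in $Y$ are uniformly thin. Taking $L=1$ already forces geodesic bigons to be $R(1)$-thin, since geodesics are $1$-quasigeodesics. The difficulty in upgrading this to triangles is that one cannot simply concatenate two sides of a triangle to form a quasigeodesic through the opposite vertex: a concatenation of geodesics may backtrack and so fail to be a quasigeodesic, so the witnessing quasigeodesics must be produced more carefully. The standard route is to use uniform fellow-traveling to control the nearest-point projections between the three sides, locate a coarse center through which all sides pass, and then bound the thinness constant in terms of $R(L)$ for a uniformly bounded $L$.

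I expect this last implication, Morse $\Rightarrow$ hyperbolic, to be the main obstacle, since passing from fellow-traveling of quasigeodesics to thin geodesic triangles is precisely the nontrivial converse direction of the Morse lemma (the forward direction being standard, see \cite{BH}). In the write-up I would either cite this known equivalence or carry out the thin-triangles argument sketched above; by contrast, the reduction in the first step is immediate from Proposition~\ref{preserved}.
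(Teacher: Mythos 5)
Your overall strategy is sound and the first step is a genuinely different (and clean) framing: the paper does not reduce to intrinsic stability of $Y$, but instead works directly in $X$, pushing geodesics of $Y$ forward under the $K$-quasi-isometric embedding $f$ and applying the stability hypothesis to their images. Your application of Proposition~\ref{preserved} to the factorization $Y \xrightarrow{\id} Y \xrightarrow{f} X$ is valid and buys you a cleaner statement (the Morse property for $Y$ itself), after which the two arguments converge on the same classical fact. The paper likewise treats that fact as well known, citing Lemma~6.2 of \cite{MM1} and giving only a sketch, so deferring to a citation is defensible.

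However, you flag the passage from the Morse property to thin triangles as ``the main obstacle'' and your proposed route around it (controlling nearest-point projections between all three sides and locating a coarse center) is vaguer and heavier than necessary; as written it is not yet an argument. The standard resolution of the exact difficulty you identify --- that concatenating two sides of a triangle need not give a quasigeodesic --- is the nearest-point trick, and it is what the paper uses: given a triangle on $x,y,z$, let $z'$ be a point of $[x,y]$ nearest to $z$, and write $[x,y]=[x,z']\cup[z',y]$. Then each of $[z,z']\cup[z',x]$ and $[z,z']\cup[z',y]$ is a $3$-quasigeodesic (for $p\in[z,z']$ and $q\in[z',x]$ one has $d(p,z')\le d(p,q)$ because $z'$ minimizes distance to $z$, whence the path length $d(p,z')+d(z',q)\le 3\,d(p,q)$). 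These share endpoints with the sides $[z,x]$ and $[z,y]$ respectively, so the Morse property forces $[x,z']$ into a uniform neighborhood of $[x,z]$ and $[z',y]$ into a uniform neighborhood of $[y,z]$, which is exactly thinness of the triangle. With this one observation inserted, your write-up closes completely; without it, the step you yourself identify as carrying the real content remains an appeal to authority rather than a proof.
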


\begin{proof}
This follows from well-known arguments, so we only provide a sketch. See Lemma $6.2$ of \cite{MM1} for details. Fix a $K$-quasi-isometric embedding $f:Y \to X$. Let $x,y,z \in Y$ and consider geodesics $[x,y],[y,z],[x,z]$ in $Y$ joining these points. It suffices to show that $[x,y]$ is contained in the $\delta$-neighborhood of the other two geodesics, for $\delta$ depending only on $K$ and the stability constants. 

If $z'$ denotes a point on $[x,y]$ nearest to $z$ in $Y$, then both $[z,z'] \cup [z',x]$ and $[z,z'] \cup [z',y]$ are $3$-quasigeodesics, where $[x,y] = [x,z'] \cup [z',y]$. The point is that the images of these (quasi-) geodesics under $f:Y \to X$ are quasigeodesics with uniform constants. Hence, there is an $R \ge 0$ depending only on these constants so that $f([z,z'] \cup [z',x]) \subset N_R\left(f([x,z])\right)$ and $f([z,z'] \cup [z',y]) \subset N_R\left(f([y,z])\right)$. Since $f$ is a $K$-quasi-isometric embedding, this implies that every point on $[x,y]$ is within $K(R+K)$ of some point on either $[x,z]$ or $[y,z]$. This completes the proof. 
\end{proof}

Although we have defined stability in a general setting, our focus will be the case of a finitely generated group $G$. Fix a finite generating set $S$ of $G$ and let $|\cdot|_S$ be the associated word metric. Recall that any two generating sets of $G$ give quasi-isometric metrics and that a finitely generated subgroup $H \le G$ is called \emph{undistorted} if the inclusion $H \to G$ is a quasi-isometric embedding for some (any) word metrics on $H$ and $G$. 

\begin{definition}
Let $G$ be a finitely generated group with word metric $|\cdot|_S$. Then $H \le G$ is \emph{stable} if $H$ is undistorted in $G$ and $H \subset (G, |\cdot|_S)$ is stable (as in Definition \ref{def: stab})  for any choice of word metric on $H$.
\end{definition}

Note that in the definition of stability for $H \le G$, since $H$ is undistorted in $G$ one can use any word metric on $H$ when defining stability. The next lemma, whose proof follows directly from Lemma \ref{preserved}, shows that the stability of $H \le G$ is also independent of the word metric on $G$.

\begin{lemma}
Let $G$ be a finitely generated group. If $H \le G$ is stable with respect to some word metric $| \cdot|_S$ on $G$, then it is stable with respect to any word metric on $G$.
\end{lemma}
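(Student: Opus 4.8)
The plan is to derive this immediately from Proposition~\ref{preserved}, using only the standard fact that any two word metrics on a finitely generated group are quasi-isometric via the identity. The entire content is packaging the hypothesis and the conclusion so that they fit the chain $X \to Y \to Z$ appearing in that proposition.

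First I would fix the given generating set $S$ together with an arbitrary second finite generating set $S'$, producing word metrics $|\cdot|_S$ and $|\cdot|_{S'}$ on $G$. Recall that the identity map $\id \colon (G, |\cdot|_{S'}) \to (G, |\cdot|_S)$ is a quasi-isometry, with constants depending only on how the generators of each set are expressed in terms of the other; in particular it is a quasi-isometric embedding. Moreover, since $H$ is undistorted in $G$---a property which, as noted in the text, is independent of the chosen word metric on $G$---the inclusion $H \hookrightarrow (G, |\cdot|_{S'})$ is a quasi-isometric embedding for any fixed word metric on $H$.

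Next I would apply Proposition~\ref{preserved} to the composite of quasi-isometric embeddings
$$ H \longrightarrow (G, |\cdot|_{S'}) \longrightarrow (G, |\cdot|_S), $$
taking $X = H$, $Y = (G, |\cdot|_{S'})$, and $Z = (G, |\cdot|_S)$, where the first arrow is the inclusion and the second is the identity. The composite $X \to Z$ is exactly the inclusion $H \hookrightarrow (G, |\cdot|_S)$, so the stability of $X$ in $Z$ demanded as the hypothesis of Proposition~\ref{preserved} is precisely the assumption that $H$ is stable with respect to $|\cdot|_S$. The proposition then yields that $X = H$ is stable in $Y = (G, |\cdot|_{S'})$; that is, $H$ is stable with respect to $|\cdot|_{S'}$. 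Since $S'$ was an arbitrary finite generating set, this gives stability with respect to any word metric on $G$.

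The only point requiring care---and it is bookkeeping rather than a genuine obstacle---is to confirm that the notion of a stable subgroup $H \le G$ unwinds exactly to stability of the inclusion $H \hookrightarrow (G, |\cdot|_S)$ in the sense of Definition~\ref{def: stab}, and that the triangle of maps above commutes so that the hypothesis of Proposition~\ref{preserved} lines up with the given one. Since the second arrow is the identity on $G$ and the first is the inclusion of $H$, the composite is the inclusion $H \hookrightarrow (G, |\cdot|_S)$, so commutativity is immediate and the result follows.
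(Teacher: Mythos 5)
Your proof is correct and is exactly the route the paper takes: the paper states that the lemma ``follows directly from Lemma \ref{preserved}'' and leaves the details to the reader, which are precisely the details you supply (the identity map between word metrics is a quasi-isometric embedding, and the composite $H \to (G,|\cdot|_{S'}) \to (G,|\cdot|_S)$ is the inclusion whose stability is the hypothesis). Nothing is missing.
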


\begin{remark}
For a finitely generated group $G$, the property of stability of a subgroup $H$ is well studied in the case where $H$ is cyclic. In this case, $H = \langle h \rangle$, the generating element $h$ is usually called either \emph{stable} or \emph{Morse}. See \cite{DMS} and the references found there.
\end{remark}

\section{The Masur-Minsky criteria for stability}
To show that convex cocompact subgroups of mapping class groups are stable, we use the criterion for hyperbolicity developed by Masur-Minsky in \cite{MM1} which we adapt for our purposes. 

We say that a family of paths $\Gamma$ in $X$ is \emph{transitive} for a subspace $Y \subset X$ if any two points in $Y$ can be connected by a path in $\Gamma$.

\begin{definition}\label{contracting}
Let $X$ be a metric space with subspace $Y\subset X$.  Let $\Gamma$ be a transitive family of paths in $X$ between points in $Y$ with the following property: for each $\beta: I \to X$ there exists a map $\pi: X \to I$ and constants $a,b,c >0$ such that

\begin{enumerate}
\item For any $t \in I$, $\mathrm{diam}_X(\beta([t,\pi(\beta(t))])) \le c$. 
\item If $d(x_1,x_2) \le 1$, then $\mathrm{diam}(\beta([\pi(x),\pi(y)])\le c$. 
\item If $d(x, \beta(\pi(x))) \ge a$ and $d(x,x') \le b \cdot d_X(x, \beta(\pi(x)))$, then
$$\mathrm{diam}(\beta[\pi(x),\pi(x')] )\le c. $$
\end{enumerate}
Then we call $\Gamma$ a \emph{family of uniformly contracting paths for $Y\subset X$}.
\end{definition}

\begin{remark}
We do not assume that the projection $\beta(\pi(x))\in \mathrm{im}(\beta)$ is a bounded distance from the points on $\beta$ which are closest to $x$.
\end{remark}

\begin{proposition}[\cite{MM1}]\label{criteria}
Let $\Gamma$ be a family of uniform contracting paths for $Y \subset X$. Then for any $L\ge 0$ there exists an $R= R(L) \ge 0$ with the following property: if $\gamma$ is a $L$-quasigeodesic that begins and ends on $Y$, then for any $\beta \in \Gamma$ with $\beta$ having the same endpoints of $\gamma$, we have $d_{\mathrm{Haus}}(\gamma, \beta) \le R$. \end{proposition}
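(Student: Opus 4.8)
The plan is to reduce the statement to the two one-sided containments $\beta \subset N_R(\gamma)$ and $\gamma \subset N_R(\beta)$, and to extract both from a single structural fact about the composition $\pi \circ \gamma \colon [0,T] \to I$. First I would reparametrize $\gamma$ so that it is coarsely continuous with consecutive integer points at distance at most $1$; this changes $L$ only by a controlled amount. Throughout, the crucial point is the remark following Definition \ref{contracting}: the coarse projection $\beta(\pi(x))$ is \emph{not} assumed to be a nearest point of $\beta$ to $x$, so every estimate must be phrased through the map $\beta \circ \pi$ and conditions (1)--(3), and never through nearest-point geometry.

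The heart of the argument is a single main lemma: there is a constant $D_0 = D_0(L,a,b,c)$, \emph{independent of the endpoints}, such that $d\big(\gamma(t), \beta(\pi(\gamma(t)))\big) \le D_0$ for all $t$, and moreover $\pi \circ \gamma$ is coarsely monotonic, meaning $t \le t'$ implies $\pi(\gamma(t)) \le \pi(\gamma(t')) + C_0$. I would prove this by contradiction. Let $x = \gamma(t_0)$ nearly maximize $D := d(\gamma(t), \beta(\pi(\gamma(t))))$, and let $[t_1,t_2] \ni t_0$ be the maximal subinterval on which this distance stays $\ge D/2$. Since the endpoints of $\gamma$ lie on $\beta$, condition (1) forces them within $c$ of their own projections, so for $D$ large the interval $[t_1,t_2]$ is interior and the distance equals $D/2$ at $t_1$ and $t_2$. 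On $[t_1,t_2]$ every point is at distance $\ge D/2 \ge a$ from $\beta$, so condition (3) applies to steps of size up to $\tfrac{b}{2}D$; subdividing greedily, each step moves $\beta \circ \pi$ by at most $c$. The aim is to conclude that $\gamma(t_1)$ and $\gamma(t_2)$ project close together, which pins the excursion to be essentially radial and then contradicts the quasigeodesic inequality $d(\gamma(t_1),\gamma(t_2)) \ge |t_2-t_1|/L - L$ against the length forced by traveling out to distance $D$ and back.

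Here is the step I expect to be the \textbf{main obstacle}. Condition (3) is only a \emph{one-step, scale-dependent} contraction: it controls the motion of $\pi$ under a single jump proportional to the \emph{current} distance to $\beta$, and naively chaining the steps of the greedy subdivision bounds the total projection displacement only \emph{linearly} in the length of the excursion, not by an absolute constant. Converting this local condition into the needed absolute, window-sized bound is exactly the Masur--Minsky bootstrapping: one must run the subdivision at the correct scale so that the number of steps is governed by $1/b$ and the accumulated displacement stays bounded by a constant depending only on $a,b,c$, and not on $D$ or on $d(\gamma(0),\gamma(T))$. This is where the real work lies, and it is also where coarse monotonicity is produced, since an excursion that both strays far and backtracks in projection is precisely what the contraction forbids.

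Finally, granting the main lemma, the two containments follow quickly. The bound $d(\gamma(t),\beta(\pi(\gamma(t)))) \le D_0$ is literally $\gamma \subset N_{D_0}(\beta)$. For $\beta \subset N_R(\gamma)$, I would use that condition (1) places $\pi(\gamma(0))$ and $\pi(\gamma(T))$ near the two ends of $I$, condition (2) makes $\pi \circ \gamma$ move in bounded $\beta$-image steps, and coarse monotonicity then makes $\pi \circ \gamma$ coarsely onto $I$; hence every $\beta(s)$ agrees with $\beta(\pi(\gamma(t)))$ for some $t$ up to bounded error, and applying $\gamma \subset N_{D_0}(\beta)$ in reverse shows $\beta(s)$ lies within $R$ of $\gamma(t)$. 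Taking $R$ to be the maximum of the two resulting constants completes the proof, with $R$ depending only on $L$ and the contraction data $a,b,c$, as required.
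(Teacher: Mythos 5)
The paper's own ``proof'' of Proposition \ref{criteria} is a citation: it simply invokes Lemma~$6.1$ of \cite{MM1} and notes that the argument there is local to the path $\beta$ and so applies to a family of contracting paths for a subspace. Your outline is, structurally, exactly the argument behind that lemma (maximal excursion, greedy subdivision at scale $b$ times the current distance to the projection, comparison against the quasigeodesic inequality, then the two one-sided containments), so the approach is the right one. The second containment $\beta\subset N_R(\gamma)$ is also fine as you describe it, and in fact does not require coarse monotonicity: conditions (1) and (2) already force the consecutive intervals $[\pi(\gamma(i)),\pi(\gamma(i+1))]$ to chain from one end of $I$ to the other, so their union covers $I$ up to a $c$-error.

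The genuine gap is at the point you yourself flag as the main obstacle: the central estimate is described but not proved, and the mechanism you gesture at (``the number of steps is governed by $1/b$'') is not how the argument closes. The step count $n$ of the greedy subdivision of an excursion $[t_1,t_2]$ is a priori only linear in $|t_2-t_1|$; no choice of subdivision scale fixes this by itself. What actually bounds $n$ is a two-sided squeeze on $d(\gamma(t_1),\gamma(t_2))$ made possible by working above a threshold. Set $D(t)=d\bigl(\gamma(t),\beta(\pi(\gamma(t)))\bigr)$, fix $A\ge a$, and let $[t_1,t_2]$ be a maximal interval with $D\ge A$; since the endpoints of $\gamma$ lie on $\beta$, conditions (1) and (2) give $D(t_1),D(t_2)\le A+c+O(L)$. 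Each greedy step has $d(\gamma(s_i),\gamma(s_{i+1}))$ comparable to $bD(s_i)\ge bA$ and, by condition (3), moves the projection by at most $c$. Hence on one hand $d(\gamma(t_1),\gamma(t_2))\le D(t_1)+nc+D(t_2)\le 2A+nc+O(L)$, while on the other hand summing the step lengths and using the quasigeodesic inequality gives $d(\gamma(t_1),\gamma(t_2))\ge n(bA-O(L))/L^2-L$. Choosing $A$ larger than a definite multiple of $(c+1)L^2/b$ makes the per-step growth of the lower bound strictly exceed that of the upper bound, which forces $n\le n_0(L,a,b,c)$; the uniform bound $D\le D_0$ then follows by iterating $D(s_{i+1})\le (1+b)D(s_i)+c$ at most $n_0$ times, and coarse monotonicity drops out of the same computation. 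Without this threshold-versus-contraction comparison the excursion is uncontrolled and the proof does not close; with it supplied, the rest of your proposal goes through.
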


\begin{proof}
This is proven in Lemma $6.1$ of \cite{MM1}, which states that any space $X$ with a family of uniformly contracting paths for $X$ has stability of quasigeodesics. The proof of Lemma $6.1$ shows that any $L$-quasigeodesic $\alpha$ whose endpoints agree with a $\beta \in \Gamma$ is contained in an $R'$-neighborhood of $\beta$, where $R'$ depending only on $L$ and the uniform contracting constants ($a,b,c$ from Definition \ref{contracting}). This proves our proposition.
\end{proof}

The following consequence is now immediate:

\begin{corollary} \label{criteria cor}
Suppose that $f: Y \to X$ is a quasi-isometric embedding between geodesic metric spaces and that $f(Y) \subset X$ has a family of uniform contracting paths. Then $Y$ is stable in $X$.
\end{corollary}

\section{Convex cocompactness implies stability}\label{ccimplies}

Our starting point is the following characterization of convex cocompact subgroups of the mapping class group, which follows easily from \cite{KL2} or by combining results of \cite{FMo} and \cite{Rafihyp}. We provide a few details using these references.

Recall that a collection of markings $M \subset \mathcal{M}(S)$ is called \emph{$E$-cobounded} if for any $\mu, \nu \in M$ and any proper subsurface $Y \subsetneq S$, we have $d_Y(\mu, \nu) \le E$.  
\begin{lemma} \label{cobounded}
Let $G$ be a finitely generated subgroup of $\Mod(S)$. If $G$ is convex cocompact, then, for any marking $\mu \in \mathcal{M}(S)$, there is an $E \ge 0$ so that the orbit $G \cdot \mu \subset \M(S)$ is $E$-cobounded. Conversely, if $G$ is undistorted in $\Mod(S)$ and there is a marking $\mu \in \mathcal{M}(S)$ and an $E\ge 0$ so that $G \cdot \mu$ is $E$-cobounded, then $G$ is convex cocompact. 
\end{lemma}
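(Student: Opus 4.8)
The statement has two implications, and I would treat them separately since they draw on different tools. The converse (cobounded $\Rightarrow$ convex cocompact) is the more elementary of the two and I would prove it first by a direct application of the distance formula, while the forward implication (convex cocompact $\Rightarrow$ cobounded) seems to genuinely require Teichm\"uller geometry, and that is where I expect the real work to lie.

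For the converse, assume $G$ is undistorted and that $G\cdot\mu$ is $E$-cobounded. By Theorem \ref{cc} it suffices to show that the orbit map $g \mapsto \pi_S(g\mu)$ is a quasi-isometric embedding $G \to \C(S)$. The upper bound $d_{\C(S)}(\pi_S(g\mu),\pi_S(h\mu)) \prec d_G(g,h)$ is automatic: $\pi_S:\M(S)\to\C(S)$ is coarsely Lipschitz (adjacent markings have uniformly close bases), and the orbit map $G \to \M(S)$ is a quasi-isometric embedding because $G$ is undistorted and $\Mod(S)$ is quasi-isometric to $\M(S)$. For the lower bound I would invoke Theorem \ref{distance} with threshold $A = \max\{A_0, E+1\}$. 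Coboundedness forces $d_Y(g\mu,h\mu)\le E < A$ for every proper $Y \subsetneq S$, so each such term vanishes and the entire sum collapses to the single term $Y = S$. Hence $d_{\M(S)}(g\mu,h\mu) \prec d_S(g\mu,h\mu)$, which, combined with $d_{\M(S)} \asymp d_G$, yields $d_G(g,h) \prec d_{\C(S)}(\pi_S(g\mu),\pi_S(h\mu))$. The two bounds give the desired quasi-isometric embedding.

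For the forward implication I would again start from Theorem \ref{cc}, but the key point is that a quasi-isometric embedding into $\C(S)$ by itself does \emph{not} bound proper subsurface projections; the distance formula only shows that $\sum_{Y\subsetneq S}[d_Y(g\mu,h\mu)]_A$ is dominated by $d_S(g\mu,h\mu)$, which still permits individual projections to grow linearly in $d_S$. The missing ingredient is thickness, which I would extract from Farb--Mosher. By \cite{FMo}, $G$ acts cocompactly on the weak convex hull $\mathcal{H}$ of its limit set in $\T(S)$, and the orbit $G\cdot x$ lies within bounded Hausdorff distance of $\mathcal{H}$. Since $\mathcal{H}/G$ is compact, its image in moduli space avoids the cusps, so $\mathcal{H}$ --- and hence every Teichm\"uller geodesic joining two orbit points, by quasiconvexity --- lies in the $\epsilon$-thick part $\T_\epsilon(S)$ for a uniform $\epsilon>0$. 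I would then apply Rafi's characterization of short curves along a Teichm\"uller geodesic \cite{Rafihyp}: an $\epsilon$-thick geodesic has all subsurface coefficients between its endpoints bounded by a constant $E_0 = E_0(\epsilon,S)$, so $d_Y(x,gx)\le E_0$ for every proper $Y$. Passing from the Teichm\"uller points to markings via a coarsely shortest-marking map (which changes each $d_Y$ by a bounded amount) gives $d_Y(\mu,g\mu)\le E$, and the $\Mod(S)$-equivariance $d_Y(g\mu,h\mu) = d_{g^{-1}Y}(\mu,g^{-1}h\mu)$ upgrades this to $E$-coboundedness of the whole orbit.

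The main obstacle is precisely this forward direction, and more sharply the fact just highlighted: curve-graph data is too coarse to detect coboundedness, so the argument must route through the thick part of $\T(S)$. The two technical checks I would be most careful about are (i) verifying that the Teichm\"uller geodesic between orbit points, not merely the orbit itself, stays thick --- this requires the quasiconvexity of the orbit together with the cocompactness argument above; and (ii) confirming that Rafi's bound and the shortest-marking comparison apply uniformly to \emph{annular} subsurfaces as well, since coboundedness in $\M(S)$ is a statement about all proper $Y$, including annuli. As the authors note, one can alternatively read the cobounded characterization directly out of Kent--Leininger \cite{KL2}, whose proof of Theorem \ref{cc} already passes through these thick-part estimates.
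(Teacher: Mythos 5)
Your proposal is correct and follows essentially the same route as the paper: the converse is the identical distance-formula argument with threshold above $E$, and the forward direction is the same Teichm\"uller-geometric argument via Rafi's thick-part/subsurface-coefficient correspondence and orbit quasiconvexity, merely phrased directly rather than by contradiction (the paper assumes $d_{Y_i}(\mu,g_i\mu)\to\infty$ and derives thin geodesics straying from the orbit). Your detour through the Farb--Mosher weak hull is unnecessary but harmless, since the orbit $G\cdot x$ lies in a fixed thick part automatically by $\Mod(S)$-invariance of the systole.
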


\begin{proof}
The first statement is contained in the proof of Theorem $7.4$ of \cite{KL2}, where the assumption on $G$ is that the orbit map from $G$ into $\C(S)$ is a quasi-isometric embedding.

Alternatively, we can see $E$-coboundedness of the orbit $G \cdot \mu$ using the fact that orbits of $G$ in $\T(S)$ are quasiconvex.  Let $x \in \T(S)$ be such that every curve in $\mu$ has bounded length in $x$. If there exists subsurfaces $Y_i \subsetneq S$ and $g_i \in G$ with
$$d_{Y_i}(\mu, g_i\cdot  \mu) \to \infty$$
then the Teichm\"{u}ller geodesics $\tau_i$ joining $x$ and $g_i\cdot x$ become $\epsilon_i$-thin for $\epsilon_i \to 0$ [Theorem 5.5, \cite{Rafihyp}]. (See also Theorem 4.1 of \cite{RScovers}.)  However, the orbit $G \cdot x$ is in some fixed thick part of $\T(S)$ and so we must have that points along $\tau$ get arbitrary far from the orbit $G \cdot x$. This contradicts orbit quasiconvexity of $G$ in $\T(S)$.

The second statement follows from the Masur-Minsky distance formula (Theorem \ref{distance}). Since $G$ is undistorted, we may coarsely measure distance in $G$ by distance in the orbit $G \cdot \mu \subset \mathcal{M}(S)$. That is, for any $g_1,g_2 \in G$
\begin{eqnarray} \label{MMform}
d_G(g_1,g_1)  \asymp d_{\M(S)}(g_1 \cdot \mu, g_2 \cdot \mu) \asymp d_{S}(g_1\cdot \mu,g_2 \cdot \mu) + \sum_{Y \subsetneq S}[d_Y(g_1\cdot \mu,g_2\cdot \mu)]_A,
\end{eqnarray}
where the symbol $\asymp$ depends only on the surface $S$ and the quasi-isometry constant of the orbit map $G \to \M(S)$. Choosing the threshold $A$ in the distance formula (Theorem \ref{distance}) to be larger than $E$ shows that distance in $G$ is coarsely distance in its curve graph orbit. Hence, the orbit map $G \to \C(S)$, given by $g \mapsto  g \cdot \mu$, is a quasi-isometric embedding and so $G$ is convex cocompact by Theorem \ref{cc}. This completes the proof.
\end{proof}

For the remained of this section suppose that $G \le \Mod(S)$ is convex cocompact.  In order to show that $G$ is stable in $\Mod(S)$, it suffices to show that $G \cdot \mu$ is stable in $\M(S)$. This follows from the fact that $\Mod(S)$ is quasi-isometric to $\M(S)$ and $G$ is undistorted in $\Mod(S)$. By Lemma \ref{cobounded}, $G\cdot \mu$ is $\widebar{E}$-cobounded for some $\widebar{E} \ge 0$. We proceed by showing that hierarchy paths form a family of uniform contracting paths for $G \cdot \mu \subset \M(S)$.

Our use of hierarchy paths is motivated by the Slice Comparison Lemma [Lemma 6.7, \cite{MM2}], Behrstock's work on the asymptotic cone of $\Mod(S)$ \cite{Be}, and more recent work of Sisto \cite{Sistocont}. To provide the most direct proof of Theorem \ref{ccimples}, we have chosen to use a theorem of Duchin-Rafi \cite{DuRa} (stated as Theorem \ref{DR} below), which is compatible with the sort of projection from Definition \ref{contracting}.  The strong contraction property of hierarchy paths between cobounded pants decompositions was also proven in Theorem $4.4$ of \cite{BMMII} in their work on the Weil-Petersson geometry of $\T(S)$.  We have included a proof of Proposition \ref{hc} here for completeness and as an application of the Masur-Minsky criteria (Theorem \ref{criteria}).\\ 

Let $p: \M(S) \to \C(S)$ be the map which associates to a marking $\mu$ the collection of curves which appear in the base  of $\mu$, i.e $a \in p(\mu)$ if and only if $a \in \mathrm{base}(\mu)$. This map, called the shadow map, is coarsely $4$-Lipschitz [Lemma 2.5, \cite{MM2}]. For any markings $\mu ,\nu \in \M(S)$, let 
$$H= H(\mu,\nu): [0,N] \to \M(S)$$
be a hierarchy path with $H(0) = \mu$ and $H(N) = \nu$. Recall that $H$ is a $M$-quasigeodesic in $\M(S)$, where M depends only on the topology of $S$ (Theorem \ref{hier qg} $(1)$). If $\mu$ and $\nu$ are $\widebar{E}$-cobounded, then all the markings that appear in $H$ are $E$-cobounded, for some $E \ge 0$ that depends only $\widebar{E}$ and the surface $S$ (Theorem \ref{hier qg} $(4)$).
By the argument in Lemma \ref{cobounded}, $h = p \circ H: [0,N] \to \C(S)$ is a quasigeodesic in $\C(S)$ from $p(\mu)$ to $p(\nu)$ whose quasigeodesic constant depends only on $E$. More precisely, if we choose the cut off $A$ in the distance formula (Theorem \ref{distance}) to be larger than $E$, then for any $i,j \in [0,N]$
\begin{eqnarray*}
|i-j| &\le& M\cdot  d_{\M(S)}(H(i),H(j)) +M \\
&\le& M K \cdot d_{S}(h(i),h(j)) + MK^2 +M,
\end{eqnarray*}
where $K$ depends only on $E$ and $S$. Hence, $h = p \circ H$ is a $K_E$-quasigeodesic in $\C(S)$, where $K_E = K^2M$.

As $h:[0,N] \to \C(S)$ is a $K_E$-quasigeodesic into the $\delta$- hyperbolic space $\C(S)$, there is a nearest point retraction $\bold{n}_h: \C(S) \to h$ as discussed in Subsection \ref{hypgeo}. Define the projection $\hpr_h: \C(S) \to [0,N]$ to the domain of the path $h$ so that for $c \in \C^0(S)$, 
$$h(\hpr_h(c)) = \bold{n}_h(c).$$ 
That is, $\hpr_h(c)$ is a parameter $i \in [0,N]$ so that the distance from $c$ to the image of $h$ is minimized at $h(i)$. By the properties of $\bold{n}_h$ stated in Subsection \ref{hypgeo}, it is immediate that there is an $L$ (depending only on $E$) so that this projection is both $L$-coarsely well-defined and coarsely $L$-Lipschitz. We emphasize that this uses only the facts that $\C(S)$ is hyperbolic and $h:[0,N] \to \C(S)$ is a quasigeodesic.

The projection $\hpr_h: [0,N] \to \C(S)$ induces a corresponding map from $\M(S)$ to $[0,N]$. Let
 $$\pr_H: \M(S) \to [0,N]$$ 
be defined as follows: for any $\alpha \in \M(S)$, set $\pr_H(\alpha) = \hpr_h(a)$, for some choice of curve $a \in p(\alpha) \subset \C(S)$. Note that for different choices of curves $a,a'  \in p(\alpha)$ we have $d_S(a,a') \le 4$ and so $|\hpr_h(a) - \hpr_h(a')| \le 4L$. 
 
 \begin{remark}
 It may seem slightly unnatural to define the projection $\pr_H$ to the domain of the path $H$, rather than to its image in $\M(S)$. We have done so for two reasons. First, this allows for a direct application of Theorem \ref{criteria}, which verifies that quasigeodesics fellow travel in a uniform way. Indeed, projecting to the domain of a path is the approach of Masur-Minsky in \cite{MM1}. Second, it seems that such a projection of a marking $\mu$ to a hierarchy path $H$ need not be a uniformly bounded distance from the closet point to $\mu$ on $H$. Using Theorem \ref{criteria} avoids this subtlety. 
 \end{remark}

Our goal for the rest of this section, achieved in Proposition \ref{hc} below, is to show that the collection of hierarchy paths between markings in a fixed orbit of a convex cocompact subgroup $G$ is a family of uniform contracting paths for the orbit in $\M(S)$. That convex cocompact subgroups are stable, Theorem \ref{ccimples} below, follows quickly from Proposition \ref{hc} and Corollary \ref{criteria cor}. The proof of the contracting property uses the following theorem of Duchin and Rafi \cite{DuRa} (see also Theorem $4.3$ of \cite{BMMII}):

\begin{theorem}[Theorem 4.2, \cite{DuRa}] \label{DR}
Given $E$ there exist $B_1$ and $B_2$ so that if $H$ is a hierarchy path in $\mathcal{M(S)}$ between $E$-cobounded markings $\mu$ and $\nu$, then for any $\alpha \in \mathcal{M}(S)$ with $d(\alpha, H) \ge B_1$ and $R = d(\alpha, H) / B_1$, we have
$$\mathrm{diam}_{\mathcal{C}(S)}(h(\mathrm{Proj}_{H}(B_R(\alpha)))) \le B_2,$$
where $B_R$ denote the $R$-ball in $\M(S)$.
\end{theorem}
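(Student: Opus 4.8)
The plan is to strip away the hierarchy-path structure and reduce the entire statement to a contraction estimate in the $\delta$-hyperbolic curve graph $\C(S)$. The first observation is that the quantity to be bounded is purely a curve-graph quantity: since $\hpr_h$ was defined through the nearest-point retraction $\mathbf{n}_h$ onto the quasigeodesic $h = p \circ H$, for every $\beta \in \M(S)$ we have $h(\pr_H(\beta)) = \mathbf{n}_h(p(\beta))$. Thus the theorem asks us to bound $\mathrm{diam}_{\C(S)}\{ \mathbf{n}_h(p(\beta)) : \beta \in B_R(\alpha)\}$. Because the shadow map $p$ is coarsely $4$-Lipschitz, the shadows of the ball satisfy $p(B_R(\alpha)) \subset N_{4R+4}(p(\alpha))$ in $\C(S)$, so everything reduces to understanding how nearest-point projection to $h$ behaves on a curve-graph ball of radius $\asymp R$ around $p(\alpha)$.

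First I would dispose of the case in which the distance from $\alpha$ to $H$ is accounted for, at the top level, by genuine curve-graph distance, say $d_{\C(S)}(p(\alpha), h) \ge 4R+4$. Then $p(B_R(\alpha))$ lies within $d_{\C(S)}(p(\alpha), h)$ of $p(\alpha)$, and the standard strong contraction property of nearest-point projection onto the $K_E$-quasigeodesic $h$ in the $\delta$-hyperbolic space $\C(S)$ (see \cite{BH}) immediately yields that this whole set projects to a set of uniformly bounded diameter.

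The substantive case is $d_{\C(S)}(p(\alpha), h) < 4R+4$. Here the large value of $d(\alpha, H)$ cannot come from the top-level projection, so by the distance formula (Theorem \ref{distance}, with threshold $A > E$) the proper-subsurface terms must account for essentially all of $d(\alpha, H) = B_1 R$. The key step is to produce a single proper subsurface $Y$ that is a \emph{shared obstacle}: $d_Y(\alpha, H(i))$ and $d_Y(\beta, H(i))$ are both large for every $i$. Since $d_{\M}(\alpha, \beta) \le R$ lets $\beta$ erode only a $\prec R$ portion of the obstacle budget, choosing $B_1$ large forces most obstacles of $\alpha$ to persist for $\beta$, and $E$-coboundedness of $H$ (Theorem \ref{hier qg}(\ref{hier qg 4})) propagates ``large for one $i$'' to ``large for all $i$''. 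Given such a $Y$, I would argue that both shadows project near $\mathbf{n}_h(\partial Y)$: when $Y$ is non-annular, the bounded geodesic image theorem forces every geodesic $[p(\alpha), h(i)]$ to pass near $\partial Y$, which pins $\partial Y$ to the common initial segment $[p(\alpha), \mathbf{n}_h(p(\alpha))]$ and hence places $\mathbf{n}_h(p(\alpha))$ within bounded distance of $\mathbf{n}_h(\partial Y)$, with the same argument applied to $\beta$; when $Y$ is annular, a large annular projection forces the shadow to wind around $\partial Y$ and therefore to lie within bounded $\C(S)$-distance of $\partial Y$, again placing $\mathbf{n}_h(p(\cdot))$ near $\mathbf{n}_h(\partial Y)$. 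Either way $\mathbf{n}_h(p(\alpha))$ and $\mathbf{n}_h(p(\beta))$ lie within bounded distance of the common point $\mathbf{n}_h(\partial Y)$, as needed.

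The main obstacle is exactly this second case: the bookkeeping that turns the distance formula into a single robust shared obstacle, together with the two separate arguments (bounded geodesic image versus winding) needed for non-annular and annular subsurfaces, where coboundedness of $H$ must be used in an essential way to prevent the obstacles from being smeared along the path. Once this is in place, the constants are routine: $B_1$ is chosen large relative to the shadow-Lipschitz constant $4$, the threshold $A$, the hyperbolicity constant $\delta$, and the contraction constant for $h$, and $B_2$ is then read off from the bounds produced in the two cases.
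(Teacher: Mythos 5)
This statement is not proved in the paper at all: it is imported verbatim as Theorem~4.2 of Duchin--Rafi \cite{DuRa}, and the only content the paper adds is the remark that a hierarchy path between $E$-cobounded markings automatically satisfies the coboundedness hypothesis of that theorem (via Theorem~\ref{hier qg}~(\ref{hier qg 4})). So your attempt is being measured against Duchin--Rafi's argument rather than anything in this paper. Your overall architecture is reasonable and is in the spirit of how such contraction statements are proved: reduce to the nearest-point projection of shadows onto the quasigeodesic $h=p\circ H$ in the hyperbolic space $\C(S)$, split according to whether the distance $d(\alpha,H)$ is seen at the top level or in proper subsurfaces, and in the second case use the bounded geodesic image theorem to pin both projections near $\bold{n}_h(\partial Y)$ for a common large link $Y$.

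The genuine gap is the step you yourself flag as ``bookkeeping'': producing a single proper subsurface $Y$ that is a large link for both $\alpha$ and $\beta$ relative to $H$. The mechanism you propose --- ``$\beta$ erodes only a $\prec R$ portion of the obstacle budget, so most obstacles of $\alpha$ persist'' --- does not work as stated. The distance formula only controls the \emph{sum} $\sum_Y[d_Y(\alpha,H(j))]_A\succ B_1R$; this budget can be spread over $\asymp B_1R/A$ subsurfaces each contributing only the threshold $A$, while on any \emph{single} subsurface the erosion $d_Y(\alpha,\beta)$ can be as large as $\asymp R\gg A$, so any individual obstacle of $\alpha$ can be wiped out. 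A correct count goes the other way: note that $d_{\M(S)}(\beta,H(j))\ge (B_1-1)R$, so $\beta$ also has obstacle budget $\succ B_1R$ against $H(j)$ in proper subsurfaces; if no subsurface were simultaneously an $A'$-large link for $\alpha$ and for $\beta$, then essentially all of $\beta$'s budget would be charged to $\sum_Y[d_Y(\alpha,\beta)]_{A'}$, contradicting $d_{\M(S)}(\alpha,\beta)\le R$ once $B_1$ is large. (Alternatively one organizes the large links with Behrstock's inequality, as Duchin--Rafi do.) Two smaller issues: your annular case is wrong as stated --- a large annular coefficient $d_Y(\alpha,\cdot)$ does not force $p(\alpha)$ to be $\C(S)$-close to $\partial Y$ --- but it is also unnecessary, since the bounded geodesic image theorem applies to annuli; and you should record that coboundedness of $H$ (Theorem~\ref{hier qg}~(\ref{hier qg 4})) is what upgrades ``large link for one $H(j)$'' to ``large link for every $H(i)$,'' which is needed before BGI pins $\partial Y$ to the segment $[p(\alpha),\bold{n}_h(p(\alpha))]$.
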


\begin{remark}
In the statement of Theorem \ref{DR} in \cite{DuRa}, the authors allow any quasigeodesic in $\M(S)$ all of whose markings are uniformly cobounded. This is automatically satisfied by the hierarchy path $H$ (Theorem \ref{hier qg} $(4)$).
\end{remark}

\begin{proposition}\label{hc}
Let $M \subset \M(S)$ be a collection of $E$-cobounded markings. The set of all hierarchy paths between markings in $M$ is a family of uniformly contracting paths for $M \subset \M(S)$. 
\end{proposition}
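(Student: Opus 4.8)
The plan is to verify the three conditions of Definition \ref{contracting} for the transitive family $\Gamma$ of all hierarchy paths between markings of $M$, taking as the required projection, for each $H = H(\mu,\nu)\colon[0,N]\to\M(S)$ in $\Gamma$, the map $\pr_H$ constructed above. Every such $H$ is an $M$-quasigeodesic (Theorem \ref{hier qg}(1)) with uniformly cobounded markings (Theorem \ref{hier qg}(4)), so its shadow $h = p\circ H$ is a $K_E$-quasigeodesic in the $\delta$-hyperbolic space $\C(S)$ with $K_E$ depending only on $E$ and $S$. Consequently every constant produced below will depend only on $E$ and $S$, and hence will be uniform across $\Gamma$. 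The organizing principle throughout is that a bound on $d_{\C(S)}(h(i),h(j))$ bounds the parameter distance $|i-j|$ (because $h$ is a quasigeodesic) and hence bounds $\mathrm{diam}_\M(H([i,j]))$ (because $H$ is a quasigeodesic); so in each case I reduce the problem to controlling the positions of the relevant projections inside $\C(S)$.

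Conditions (1) and (2) I expect to be routine applications of this principle. For (1), if $a\in p(H(t))$ is a base curve used to compute $\pr_H(H(t)) = \hpr_h(a)$, then $a$ lies $\C(S)$-distance at most $1$ from the point $h(t)$ on $h$, so $h(\pr_H(H(t))) = \mathbf{n}_h(a)$ lies a bounded distance from $h(t)$; the principle above then bounds $|t-\pr_H(H(t))|$ and hence $\mathrm{diam}_\M(H([t,\pr_H(H(t))]))$. For (2), $d_\M(x,y)\le 1$ forces $d_{\C(S)}(p(x),p(y))$ to be bounded because the shadow map is coarsely $4$-Lipschitz, and since $\hpr_h$ is coarsely $L$-Lipschitz (using only hyperbolicity of $\C(S)$ and that $h$ is a quasigeodesic) the parameters $\pr_H(x),\pr_H(y)$ differ by a bounded amount, whence $\mathrm{diam}_\M(H([\pr_H(x),\pr_H(y)]))$ is bounded.

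Condition (3) is the substantive one, and is where I would invoke Theorem \ref{DR}, noting that this theorem bounds exactly $\mathrm{diam}_{\C(S)}(h(\pr_H(B_R(\alpha))))$ for this same projection. The essential preliminary — and what I expect to be the main obstacle — is the comparison
$$ d_\M\bigl(x,\,H(\pr_H(x))\bigr) \;\asymp\; d_\M(x,H), $$
with constants depending only on $E$ and $S$, which is needed because $H(\pr_H(x))$ need not be a closest point of $H$ to $x$ while Theorem \ref{DR} is phrased through $d_\M(x,H)$. To prove it, let $H(s_0)$ realize $d := d_\M(x,H)$. The shadow map places $p(x)$ within $\C(S)$-distance $\asymp d$ of $h(s_0)$, and since $h(\pr_H(x)) = \mathbf{n}_h(p(x))$ realizes $d_{\C(S)}(p(x),h)$ it too lies within $\asymp d$ of $p(x)$; comparing through the quasigeodesics $h$ and then $H$ bounds $d_\M(H(s_0),H(\pr_H(x)))$ by $\asymp d$, and the triangle inequality gives $d_\M(x,H(\pr_H(x)))\lesssim d$. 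With this in hand, set $D = d_\M(x,H(\pr_H(x)))$ and $R = d_\M(x,H)/B_1$, so that $R\asymp D$. Choosing the constant $a$ large enough that $D\ge a$ forces $d_\M(x,H)\ge B_1$, and $b$ small enough that the hypothesis $d_\M(x,x')\le bD$ forces $x'\in B_R(x)$, Theorem \ref{DR} yields $d_{\C(S)}(h(\pr_H(x)),h(\pr_H(x')))\le B_2$. The organizing principle then bounds $|\pr_H(x)-\pr_H(x')|$ and hence $\mathrm{diam}_\M(H([\pr_H(x),\pr_H(x')]))$, completing (3). All remaining work is the quasigeodesic bookkeeping subsumed by the organizing principle.
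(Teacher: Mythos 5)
Your proposal is correct and follows the same overall architecture as the paper's proof: the same projection $\pr_H$ built from the nearest-point retraction to the shadow quasigeodesic $h = p\circ H$, the same "organizing principle" converting $\C(S)$-distance bounds into parameter bounds and then into $\M(S)$-diameter bounds for conditions (1) and (2), and the same appeal to the Duchin--Rafi contraction theorem for condition (3) after first comparing $d_{\M(S)}(x, H(\pr_H(x)))$ with $d_{\M(S)}(x,H)$. The one place you genuinely diverge is in that comparison: the paper bounds $d_Y(\alpha,\bar\alpha) \le d_Y(\alpha,n(\alpha)) + E$ over all proper subsurfaces $Y$ (using $E$-coboundedness of the markings on $H$) and then feeds these into the distance formula with threshold $A \ge 2E$, whereas you work entirely in $\C(S)$: you bound $d_{\C(S)}(h(s_0), h(\pr_H(x)))$ by $\asymp d_{\M(S)}(x,H)$ and convert back to $\M(S)$ using that $h$ is a \emph{parametrized} $K_E$-quasigeodesic. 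Your route is slightly more economical since it invokes the distance formula only once (in establishing that $h$ is a genuine quasigeodesic, which both proofs need anyway), while the paper's route makes the dependence on coboundedness explicit at the level of individual subsurface projections; both give the multiplicative bound with constants depending only on $E$ and $S$, and the final bookkeeping with $a = B_1B_3$, $b = 1/(B_1B_3)$ is identical.
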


\begin{proof}
Let $\mu, \nu \in M$ be arbitrary and let $H:[0,N] \to \M(S)$ be an hierarchy path with $H(0) =\mu$ and $H(N) = \nu$. We show that the conditions from Proposition \ref{criteria} are satisfied for the projections $\pr_H$ defined above, with constants $a,b,c$ depending only on $E$. For $(1)$, we must show that for any $i \in I$, 
$$\mathrm{diam}_{\M(S)}H([i,\pr_H(H(i))])$$
is bounded by a constant depending only on $K_E$. As $H$ is an $M$-quasigeodesic, this quantity is bounded by $M |i - \pr_H(H(i))| +M$. Since $h(i) = p(H(i))$, for any curve $a \in p(H(i))$ the difference $|\hpr_h(a) - i|$ is bounded by $L$. Hence,
$$M |i - \pr_H(H(i))| +M \le M |i - \pr_h(a)| +M  \le M\cdot L +M,$$
as required.

For $(2)$, we show that for any $\alpha, \beta \in \M(S)$ with $d_{\M(S)}(\alpha,\beta) \le 1$
$$\mathrm{diam}_{\M(S)}(H([\pr_H(\alpha), \pr_H(\beta)])) $$
is bounded by a constant depending only on $E$. This is similar to $(1)$, since again it suffices to bound $|\pr_H(\alpha) -  \pr_H(\beta)|$. Let $a \in p(\alpha)$ and $b \in p(\beta)$, then $d_S(a,b) \le 8$ ($p$ is 4-Lipschitz) and so $|\hpr_h(a) - \hpr_h(b)| \le 8L$. Hence,
$$|\pr_H(\alpha) -  \pr_H(\beta)| \le |\hpr_h(a) - \hpr_h(b)| \le 8L.$$

For $(3)$, we will apply Theorem \ref{DR}. Before doing so, we must first show that there is a $B_3 \ge 0$ so that for any $\alpha \in \M(S)$, 
$$ d_{\M(S)}(\alpha, H(\pr_H(\alpha)) \le B_3 \cdot d_{\M(S)}(\alpha, H), $$
whenever $d_{\M(S)}(\alpha, H)$ is sufficiently large. Here, as in Theorem \ref{DR}, $d_{\M(S)}(\alpha, H)$ is the minimum distance from $\alpha$ to any marking in the image of $H$.

Let $n(\alpha)$ be a marking on $H$ that is closest to $\alpha$ and set $\bar{\alpha} = H(\pr_H(\alpha))$. By construction, $d_S(\alpha, \bar{\alpha}) \le d_S(\alpha, n(\alpha))$. For a proper subsurface $Y \subsetneq S$,
\begin{eqnarray}\label{nearest}
d_Y(\alpha, \bar{\alpha}) \le d_Y(\alpha, n(\alpha)) + d_Y(n(\alpha),\bar{\alpha}) \le d_Y(\alpha,n(\alpha)) + E 
\end{eqnarray}

Plugging the above inequality (\ref{nearest}) into the distance formula (Theorem \ref{distance}) with threshold $A \ge 2E$ gives
$$d_{\M(S)}(\alpha, \bar{\alpha}) \le B_3 \cdot d_{\M(S)}(\alpha, n(\alpha)), $$
for some $B_3 \ge 0$. Note that to eliminate the additive constant in the distance formula, we have used that $d_{\M(S)}(\alpha, n(\alpha))\neq 0$.

Now set $a = B_1B_3, b = \frac{1}{B_1B_3},$ and $c =M(K_E \cdot B_2 +K_E +1)$, where $B_1$ and $B_2$ are as in Theorem \ref{DR} and $B_3$ was determined above. Let $\alpha \in \M(S)$ with $d_{\M(S)}(\alpha, H(\pr_H(\alpha)))\ge a$ and $\beta \in \M(S)$ with $d_{\M(S)}(\alpha,\beta) \le b \cdot d_{\M(S)}(\alpha, H(\pr_H(\alpha)))$. Then 
$$d_{\M(S)}(\alpha, H)\ge 1/B_3 \cdot d_{\M(S)}(\alpha, H(\pr_H(\alpha)))\ge a/B_3 = B_1$$
and
\begin{eqnarray*}
d_{\M(S)}(\alpha,\beta) &\le& b \cdot d_{\M(S)}(\alpha, H(\pr_H(\alpha))) \\
&\le& \frac{1}{B_1B_3} \cdot d_{\M(S)}(\alpha, H(\pr_H(\alpha))) \\
&\le& \frac{1}{B_1} \cdot d_{\M(S)}(\alpha, H). 
\end{eqnarray*}
Hence, $\beta \in B_R(\alpha)$ for $R = \frac{1}{B_1}d_{\M(S)}(\alpha, H)$, and so by Theorem \ref{DR}
$$d_{\C(S)}(h(\pr_H(\alpha)),h(\pr_H(\beta))) \le B_2. $$

Since $h$ is a $K_E$-quasigeodesic, $|\pr_H(\alpha) - \pr_H(\beta)| \le K_E \cdot B_2 +K_E$, and so we conclude that 
 $$\mathrm{diam}_{\M(S)}(H([\pr_H(\alpha),\pr_H(\beta)]) \le M(K_E \cdot B_2 +K_E) +M = c.$$
This completes the proof of condition $(3)$, and shows that the collection of hierarchy paths between markings in $M$ is a family of uniformly contracting paths for $M$ in $\M(S)$.
\end{proof}

\begin{theorem} \label{ccimples}
If $G \le \Mod(S)$ is convex cocompact, then $G$ is stable.
\end{theorem}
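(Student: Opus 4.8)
The plan is to assemble the pieces already developed in this section; almost all of the real work has been done in Proposition \ref{hc}, so what remains is to route its conclusion through the stability criterion of Corollary \ref{criteria cor} and the reduction to $\M(S)$ recorded at the start of the section.

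First I would fix a marking $\mu \in \M(S)$ and use that $G$ is convex cocompact to extract two facts: that $G$ is undistorted in $\Mod(S)$, and that by Lemma \ref{cobounded} the orbit $G \cdot \mu$ is $\widebar{E}$-cobounded for some $\widebar{E} \ge 0$. Writing $M = G \cdot \mu$, Proposition \ref{hc} then says that the family of all hierarchy paths between markings of $M$ is a family of uniformly contracting paths for $M \subset \M(S)$ in the sense of Definition \ref{contracting}.

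Next I would invoke the orbit map. Since $\Mod(S)$ is $\Mod(S)$-equivariantly quasi-isometric to $\M(S)$ and $G$ is undistorted in $\Mod(S)$, the orbit map $f \colon G \to \M(S)$ given by $g \mapsto g\cdot \mu$ is a quasi-isometric embedding, where $G$ carries a word metric (its Cayley graph serving as the ambient geodesic space). Its image $f(G) = M$ carries a family of uniformly contracting paths, so Corollary \ref{criteria cor} yields directly that $G$ is stable in $\M(S)$. By the reduction noted at the start of the section --- which is precisely Proposition \ref{preserved} applied to the chain of quasi-isometric embeddings $G \to \Mod(S) \to \M(S)$, where the first map is the undistorted inclusion and the second the orbit quasi-isometry --- stability of $G$ in $\M(S)$ transfers to stability of $G$ in $\Mod(S)$. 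Combined with undistortedness, this is exactly the assertion that $G$ is a stable subgroup.

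I do not expect a genuine obstacle in this theorem: the contraction estimate, which is the delicate part, was already packaged from Duchin--Rafi (Theorem \ref{DR}) into the Masur--Minsky criteria inside Proposition \ref{hc}. The only points needing care are routine --- checking that composing the undistorted inclusion with the orbit quasi-isometry indeed gives a quasi-isometric embedding, and tracking that all constants emerging from Corollary \ref{criteria cor} and Proposition \ref{preserved} depend only on $S$ and the data of $G$ already fixed.
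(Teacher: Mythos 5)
Your argument is correct and is essentially identical to the paper's proof: fix an orbit $G\cdot\mu$, use Lemma \ref{cobounded} for coboundedness, Proposition \ref{hc} to get uniformly contracting hierarchy paths, Corollary \ref{criteria cor} for stability in $\M(S)$, and Proposition \ref{preserved} applied to $G \to \Mod(S) \to \M(S)$ to transfer stability back to $\Mod(S)$. No substantive differences.
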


\begin{proof}
Since $G$ is convex cocompact, it is undistorted in $\Mod(S)$. Fix $\mu \in \M(S)$ and recall that the orbit map $\Mod(S) \to \M(S)$, given by $g \mapsto g \cdot \mu$, is a quasi-isometry.  Hence, by Proposition \ref{preserved}, it suffices to show that $G \to \M(S)$ is stable. By Theorem \ref{cobounded}, the orbit $G \cdot \mu$ is $E$-cobounded for some $E\ge 0$. Proposition \ref{hc} then implies that the set of all hierarchy paths between vertices in the orbit $G \cdot \mu$ is a family of uniform contracting paths for $G\cdot \mu$ in $\M(S)$. By Corollary \ref{criteria cor}, this implies that $G \to G\cdot \mu \subset \M(S)$ is stable. Thus $G$ is a stable subgroup of $\Mod(S)$.
\end{proof}

The following corollary is now immediate.
\begin{corollary}\label{anygen}
Let $G \le \Mod(S)$ be convex cocompact. Then $G$ is quasiconvex in $\Mod(S)$ with respect to any (finite) generating set.
\end{corollary}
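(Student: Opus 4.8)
The plan is to derive this as an immediate consequence of the stability of $G$ in $\Mod(S)$, which is Theorem \ref{ccimples}, combined with the Hausdorff-fellow-traveling that stability provides. The key observation is that quasiconvexity with respect to a fixed generating set is a statement about geodesics in the corresponding word metric, and that stability controls \emph{all} quasigeodesics sharing endpoints in $G$, not merely those in one chosen metric. So the argument should reduce quasiconvexity in an arbitrary word metric to the stability condition applied to that metric's geodesics.

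First I would fix a finite generating set $S'$ of $\Mod(S)$, giving a word metric $|\cdot|_{S'}$, and let $S$ be the generating set with respect to which stability was established. Since any two word metrics on a finitely generated group are quasi-isometric (via the identity map), the identity $(\Mod(S), |\cdot|_{S'}) \to (\Mod(S), |\cdot|_S)$ is a $K$-quasi-isometry for some $K \ge 1$. Now take any $g_1, g_2 \in G$ and any geodesic $\sigma$ joining them in $(\Mod(S), |\cdot|_{S'})$. Under the identity map, $\sigma$ becomes a $K$-quasigeodesic $\sigma'$ in $(\Mod(S), |\cdot|_S)$ with the same endpoints $g_1, g_2 \in G$.

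Next I would invoke stability. By Theorem \ref{ccimples}, $G$ is stable in $\Mod(S)$ with respect to $|\cdot|_S$; via Remark \ref{stab implies qc}, any $K$-quasigeodesic in $(\Mod(S),|\cdot|_S)$ beginning and ending on $G$ stays within an $R' = R(\max\{K, K_0\})$-neighborhood of $G$, where $K_0$ is the quasi-isometry constant of the inclusion $G \hookrightarrow (\Mod(S),|\cdot|_S)$ and the neighborhood is measured in $|\cdot|_S$. Applying this to $\sigma'$ gives $\sigma' \subset N_{R'}^{|\cdot|_S}(G)$. Finally I would transport this neighborhood control back to $|\cdot|_{S'}$: since the identity map is a $K$-quasi-isometry, an $R'$-neighborhood in $|\cdot|_S$ is contained in an $R''$-neighborhood in $|\cdot|_{S'}$ for $R''$ depending only on $K$ and $R'$. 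Hence $\sigma \subset N_{R''}^{|\cdot|_{S'}}(G)$, which is exactly quasiconvexity of $G$ in $(\Mod(S), |\cdot|_{S'})$.

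I do not expect a serious obstacle here; the content is entirely packaged in Theorem \ref{ccimples} and Remark \ref{stab implies qc}, and the corollary is a bookkeeping exercise tracking how neighborhood radii transform under quasi-isometry. The only point requiring mild care is confirming that the quasigeodesic constant feeding into $R(L)$ depends only on $K$ (hence only on the pair of generating sets), so that the resulting $R''$ is a genuine constant independent of the choice of $g_1, g_2$ and of the geodesic $\sigma$; this is immediate because $K$ is fixed once the two generating sets are fixed.
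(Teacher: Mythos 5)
Your argument is correct and is exactly the unpacking the paper intends: the paper declares the corollary ``immediate'' from Theorem \ref{ccimples}, relying on the lemma that stability is independent of the word metric together with Remark \ref{stab implies qc}, which is precisely the chain you spell out (geodesics in an arbitrary word metric become uniform quasigeodesics in the reference metric, stability keeps them near $G$, and the neighborhood bound transfers back). No gap; the bookkeeping of constants is handled as the paper would.
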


\section{Stability implies convex cocompactness} \label{stabilityimples}

The proof of the converse to Theorem \ref{ccimples} is a straightforward contradiction argument using the structure of hierarchy paths and the marking complex (see Section \ref{Modbackground}).  Before we proceed with the proof, we recall some notions from \cite{BMin} about the product regions of $\M(S)$ associated to simplices of $\C(S)$. By a simplex of $\C(S)$, we mean a collection of pairwise adjacent vertices of $\C(S)$. 

Let $\Delta \subset \C(S)$ be a simplex, and let $\Q(\Delta) \subset \M(S)$ be the set of markings whose bases contain $\Delta$.  We note that $\Q(\Delta)$ is quasi-isometric to $\mathrm{Stab}_{\Mod(S)}(\Delta)$, the stabilizer of $\Delta$ in $\Mod(S)$.  Let $\sigma(\Delta) \subset S$ be the components of $S\setminus \Delta$ which are not pairs of pants, including the annuli about the curves of $\Delta$. There is a map  
$$\Q(\Delta) \to \prod_{Y \in \sigma(\Delta)} \M(Y)$$
where $\M(Y)= \C(Y)$ if $Y$ is an annulus.  The map is given by restricting (or projecting) a marking $\mu \in \Q(\Delta)$ to markings on the subsurfaces in $\sigma(\Delta)$ and, for each $\alpha \in \Delta$, associating the transversal to $\alpha$ in $\mu$ to a corresponding point in $\C(\alpha)$.  The following lemma is essentially an application of the distance formula (Theorem \ref{distance}):

\begin{lemma}[Lemma 2.1, \cite{BMin}] \label{BM prod}
The correspondence
$$ \Q(\Delta) \to \prod_{Y \in \sigma(\Delta)} \M(Y)$$
is a $P$-quasi-isometry, where $P\ge 0$ depends only on the surface $S$.
\end{lemma}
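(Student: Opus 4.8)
The plan is to derive the quasi-isometry directly from the distance formula (Theorem \ref{distance}), as indicated by the phrase ``essentially an application of the distance formula.'' The map records, for a marking $\mu \in \Q(\Delta)$, its restriction to each non-annular piece $Y \in \sigma(\Delta)$ together with the transversal datum at each annulus of $\sigma(\Delta)$; it is well-defined up to uniformly bounded ambiguity because the choices involved in restricting a marking to a subsurface move subsurface projections by a bounded amount. I would first establish coarse distance preservation and then coarse surjectivity, which together give the $P$-quasi-isometry.

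For the distance estimate, fix $\mu_1, \mu_2 \in \Q(\Delta)$ and apply the distance formula with a threshold $A$ to be chosen large. The key point is that only subsurfaces $W$ contained in some piece $Y \in \sigma(\Delta)$ contribute to $\sum_W [d_W(\mu_1,\mu_2)]_A$. I would classify an arbitrary $W \subseteq S$ relative to $\Delta$: (i) $W$ is disjoint from $\Delta$, hence contained in a complementary component, so it either lies in some $Y \in \sigma(\Delta)$ or lies in a pair of pants and supports no essential projection; (ii) $W$ is the annulus about a curve of $\Delta$, i.e.\ one of the factors; or (iii) some $\alpha \in \Delta$ has nonempty projection $\pi_W(\alpha)$ with $W$ not the annulus about $\alpha$. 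In case (iii), since $\alpha \in \mathrm{base}(\mu_1) \cap \mathrm{base}(\mu_2)$, both $\pi_W(\mu_1)$ and $\pi_W(\mu_2)$ contain $\pi_W(\alpha)$; as the projection of a marking to $W$ has uniformly bounded diameter, this forces $d_W(\mu_1,\mu_2)$ to be bounded by a constant depending only on $S$. The same reasoning (using that the bases share $\alpha$) handles $W = S$. Choosing $A$ larger than this constant discards every case-(iii) term, leaving
$$d_{\M(S)}(\mu_1,\mu_2) \asymp \sum_{Y \in \sigma(\Delta)} \ \sum_{W \subseteq Y} [d_W(\mu_1,\mu_2)]_A.$$

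Next I would observe that for $W \subseteq Y$ the projection $d_W(\mu_1,\mu_2)$ depends, up to bounded error, only on the restrictions $\mu_i|_Y$, and that the annular factors are read off directly in $\C(\alpha)$. Applying the distance formula inside each $\M(Y)$ (and using $\M(Y) = \C(Y)$ for annuli) converts the inner sums into $d_{\M(Y)}$ of the restricted markings, giving $d_{\M(S)}(\mu_1,\mu_2) \asymp \sum_{Y \in \sigma(\Delta)} d_{\M(Y)}(\mu_1|_Y, \mu_2|_Y)$. Since $\sigma(\Delta)$ has cardinality bounded by the complexity of $S$, the $\ell^1$-sum on the right is bi-Lipschitz equivalent to the product metric on $\prod_{Y} \M(Y)$, which yields the quasi-isometric embedding with constant $P = P(S)$. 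Finally, coarse surjectivity follows by building, for any prescribed tuple, a marking whose base contains $\Delta$ and whose restriction to each piece realizes the corresponding coordinate; this is possible up to bounded error, so the image is coarsely dense.

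The main obstacle is the bookkeeping in case (iii): one must verify uniformly, across all subsurfaces $W$ meeting $\Delta$ and across all $\mu_1,\mu_2 \in \Q(\Delta)$, that the shared base curves of $\Delta$ pin the projections $\pi_W(\mu_i)$ to a common bounded region, and then confirm that a single threshold $A$ depending only on $S$ simultaneously discards all such terms while preserving the comparison on each factor. All constants ultimately depend only on the topological type of $S$, since there are finitely many homeomorphism types of pairs $(S,\Delta)$.
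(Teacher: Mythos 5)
Your argument is correct and is precisely the ``application of the distance formula'' that the paper alludes to: the paper itself does not prove this lemma but cites Behrstock--Minsky, whose proof runs exactly along your lines (any subsurface $W$ overlapping $\Delta$ has $\pi_W(\mu_1)$ and $\pi_W(\mu_2)$ both pinned near $\pi_W(\Delta)$, so a large threshold discards those terms and the remaining sum splits over the pieces of $\sigma(\Delta)$). The case analysis, the choice of a single threshold $A$ depending only on $S$, and the coarse-surjectivity step are all as in the cited source.
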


We can now give the idea of the proof of Theorem \ref{stabimplies}. If a group $G \le \Mod(S)$ is stable but not convex cocompact, then Lemma \ref{cobounded} implies that the $G$-orbit of some marking $\mu \in \M(S)$ does not have bounded subsurface projections.  Thus, for any $E>0$, we can find a marking $\nu \in G\cdot \mu$ such that $d_Y(\mu, \nu)>E$ for some proper subsurface $Y \subsetneq S$.  Theorem \ref{hier qg} implies that there is a hierarchy path $H$ from $\mu$ to $\nu$ and a subsegment $H_Y \subset H$ with $H_Y \subset \Q(\partial Y)$ so that $\left|H_Y\right| \succ E$.  If $\alpha_Y, \beta_Y \in H_Y$ are the initial and terminal markings of $H_Y$, respectively, then stability of $G$ implies that there are markings $\mu_1, \mu_2 \in G \cdot \mu$ such that $\mu_1$ and $\mu_2$ are within some uniform distance of $\alpha_Y$ and $\beta_Y$.  Using the product structure in Lemma \ref{BM prod}, we can use $H_Y$ to build two quasigeodesics between $\mu_1$ and $\mu_2$ with constants depending only on $S$, whose Hausdorff distance is coarsely at least $E$. Since $E>0$ was chosen arbitrarily, this contradicts the stability assumption for $G \cdot \mu$.\\

In this last step, we are taking advantage of the well-known fact that quasigeodesics in product spaces need not fellow-travel, a variation of which we record in the following lemma:

\begin{lemma} \label{quasiinflats}
Let $\mathcal{X}$ and $\mathcal{Y}$ be connected, infinite-diameter graphs and let $\mathcal{Z}$ be the 1-skeleton of $\mathcal{X} \times \mathcal{Y}$, endowed with the graph metric.  For any vertices $z_1 = (x_1,y_1)$ and $z_2 = (x_2,y_2)$ of $\mathcal{Z}$, there are $3$-quasigeodesics $\gamma_a ,\gamma_b$ each from $z_1$ to $z_2$ so that 
$$d_{\mathrm{Haus}}(\gamma_a,\gamma_b) \ge \max \{d_X(x_1,x_2), d_Y(y_1,y_2) \}.$$
\end{lemma}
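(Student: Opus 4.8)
The plan is to construct the two quasigeodesics explicitly by traversing the two factors of the product in opposite orders, then verify both the quasigeodesic constant and the Hausdorff distance lower bound directly.

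First I would set up the construction. In the product graph $\mathcal{Z}$, the metric is given by $d_{\mathcal{Z}}((x,y),(x',y')) = d_{\mathcal{X}}(x,x') + d_{\mathcal{Y}}(y,y')$, since $\mathcal{Z}$ is the $1$-skeleton with the graph metric and one may only change one coordinate per edge. Fix geodesics $[x_1,x_2]$ in $\mathcal{X}$ and $[y_1,y_2]$ in $\mathcal{Y}$. I would define $\gamma_a$ to be the concatenation of the horizontal path from $(x_1,y_1)$ to $(x_2,y_1)$ (moving along $[x_1,x_2]$ in the first coordinate, keeping $y_1$ fixed) followed by the vertical path from $(x_2,y_1)$ to $(x_2,y_2)$. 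Symmetrically, $\gamma_b$ first moves vertically from $(x_1,y_1)$ to $(x_1,y_2)$ and then horizontally to $(x_2,y_2)$. Both are genuine geodesics in $\mathcal{Z}$, since each has length exactly $d_{\mathcal{X}}(x_1,x_2) + d_{\mathcal{Y}}(y_1,y_2) = d_{\mathcal{Z}}(z_1,z_2)$; in particular they are $1$-quasigeodesics, which is stronger than the claimed $3$, so the quasigeodesic bound is immediate. (The weaker constant in the statement presumably allows for reparametrization flexibility, but the naive ``L-shaped'' paths already suffice.)

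Next I would establish the Hausdorff distance lower bound. The corner point $z_a = (x_2, y_1)$ lies on $\gamma_a$. Every point of $\gamma_b$ has the form $(x_1, y)$ with $y \in [y_1,y_2]$, or $(x, y_2)$ with $x \in [x_1, x_2]$. For a point of the first type, $d_{\mathcal{Z}}(z_a, (x_1,y)) = d_{\mathcal{X}}(x_2,x_1) + d_{\mathcal{Y}}(y_1,y) \ge d_{\mathcal{X}}(x_1,x_2)$; for a point of the second type, $d_{\mathcal{Z}}(z_a,(x,y_2)) = d_{\mathcal{X}}(x_2,x) + d_{\mathcal{Y}}(y_1,y_2) \ge d_{\mathcal{Y}}(y_1,y_2)$. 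Hence the distance from $z_a$ to any point of $\gamma_b$ is at least $\min\{d_{\mathcal{X}}(x_1,x_2), d_{\mathcal{Y}}(y_1,y_2)\}$. To get the $\max$ rather than the $\min$, I would instead compare $z_a$ against $\gamma_b$ more carefully, or symmetrically use the corner $(x_1,y_2)$ of $\gamma_b$: the point realizing the larger of the two factors forces the bound. Concretely, taking the corner associated with whichever factor is larger and bounding its distance to the entire opposite path yields $d_{\mathrm{Haus}}(\gamma_a,\gamma_b) \ge \max\{d_{\mathcal{X}}(x_1,x_2), d_{\mathcal{Y}}(y_1,y_2)\}$.

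The main obstacle, and the only genuinely delicate point, is confirming that the corner point gives the $\max$ rather than merely the $\min$: one must check that the chosen corner is far from \emph{all} of the opposite quasigeodesic, and select the corner corresponding to the dominant factor. This is a short case analysis as above, so I expect no real difficulty — the lemma is essentially the standard observation that in a product of two unbounded spaces the two ``staircase'' geodesics between opposite corners of a rectangle diverge by the side lengths. The hypotheses that $\mathcal{X}$ and $\mathcal{Y}$ are connected and of infinite diameter guarantee the relevant geodesics exist and that the divergence can be made arbitrarily large when this lemma is applied in the proof of Theorem \ref{stabimplies}.
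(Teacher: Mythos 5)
There is a genuine gap, and it sits exactly at the point you flagged as ``delicate'': the two L-shaped geodesics do \emph{not} have Hausdorff distance $\max\{d_{\mathcal{X}}(x_1,x_2),d_{\mathcal{Y}}(y_1,y_2)\}$; their Hausdorff distance is exactly the \emph{minimum} of the two side lengths. Indeed, suppose $d_{\mathcal{Y}}(y_1,y_2)=1$ and $d_{\mathcal{X}}(x_1,x_2)=10$. Every point $(x,y_1)$ on the horizontal leg of $\gamma_a$ is within distance $1$ of the point $(x,y_2)$ on the horizontal leg of $\gamma_b$, and every point $(x_2,y)$ on the vertical leg of $\gamma_a$ is within $1$ of the endpoint $(x_2,y_2)$; by symmetry the same holds with the roles of $\gamma_a$ and $\gamma_b$ reversed, so $d_{\mathrm{Haus}}(\gamma_a,\gamma_b)=1$, not $10$. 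Your own corner computation already reveals this: the lower bound you obtain for the corner $(x_2,y_1)$ is $\min\{d_{\mathcal{X}}(x_1,x_2),d_{\mathcal{Y}}(y_1,y_2)\}$, and that minimum is actually attained (by the endpoint $(x_2,y_2)\in\gamma_b$), so no choice of corner can do better. The proposed fix of ``taking the corner associated with whichever factor is larger'' does not help, because the corner of either L-path is always within $\min\{d_{\mathcal{X}}(x_1,x_2),d_{\mathcal{Y}}(y_1,y_2)\}$ of an endpoint of the other path. In the application (Theorem \ref{stabimplies}) this failure is fatal: there the $\mathcal{Y}$-distance is made large while the $\mathcal{X}$-distance may be small, so your two paths would be uniformly Hausdorff close.

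The missing idea is where the infinite-diameter hypothesis enters, and it is also why the lemma asks only for $3$-quasigeodesics rather than geodesics (your parenthetical that the constant $3$ is mere ``reparametrization flexibility'' is a symptom of the gap). Assume $d:=d_{\mathcal{X}}(x_1,x_2)\ge d_{\mathcal{Y}}(y_1,y_2)$. The paper takes $\gamma_a$ to be one of your L-shaped geodesics, but builds $\gamma_b$ as a genuine detour: using that $\mathcal{Y}$ is connected of infinite diameter, choose $y_3\in\mathcal{Y}$ with $d_{\mathcal{Y}}(y_1,y_3)=d$, and let $\gamma_b$ run from $(x_1,y_1)$ to $(x_1,y_3)$, then to $(x_2,y_3)$, then to $(x_2,y_2)$, each leg a geodesic. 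A short case analysis (any two points of $\gamma_b$ lying on a common pair of consecutive legs are joined geodesically, and points on the first and third legs are at $\mathcal{Z}$-distance at least $d$ while the path between them has length at most $3d$) shows $\gamma_b$ is a $3$-quasigeodesic, and the point $(x_1,y_3)$ on it is at distance exactly $d=\max\{d_{\mathcal{X}}(x_1,x_2),d_{\mathcal{Y}}(y_1,y_2)\}$ from all of $\gamma_a$. Some such excursion outside the ``rectangle'' spanned by the two geodesics is unavoidable: both of your paths lie in that rectangle, and within it the stated lower bound is simply false.
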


\begin{figure}[htbp]
\begin{center}
\includegraphics[width = 100mm]{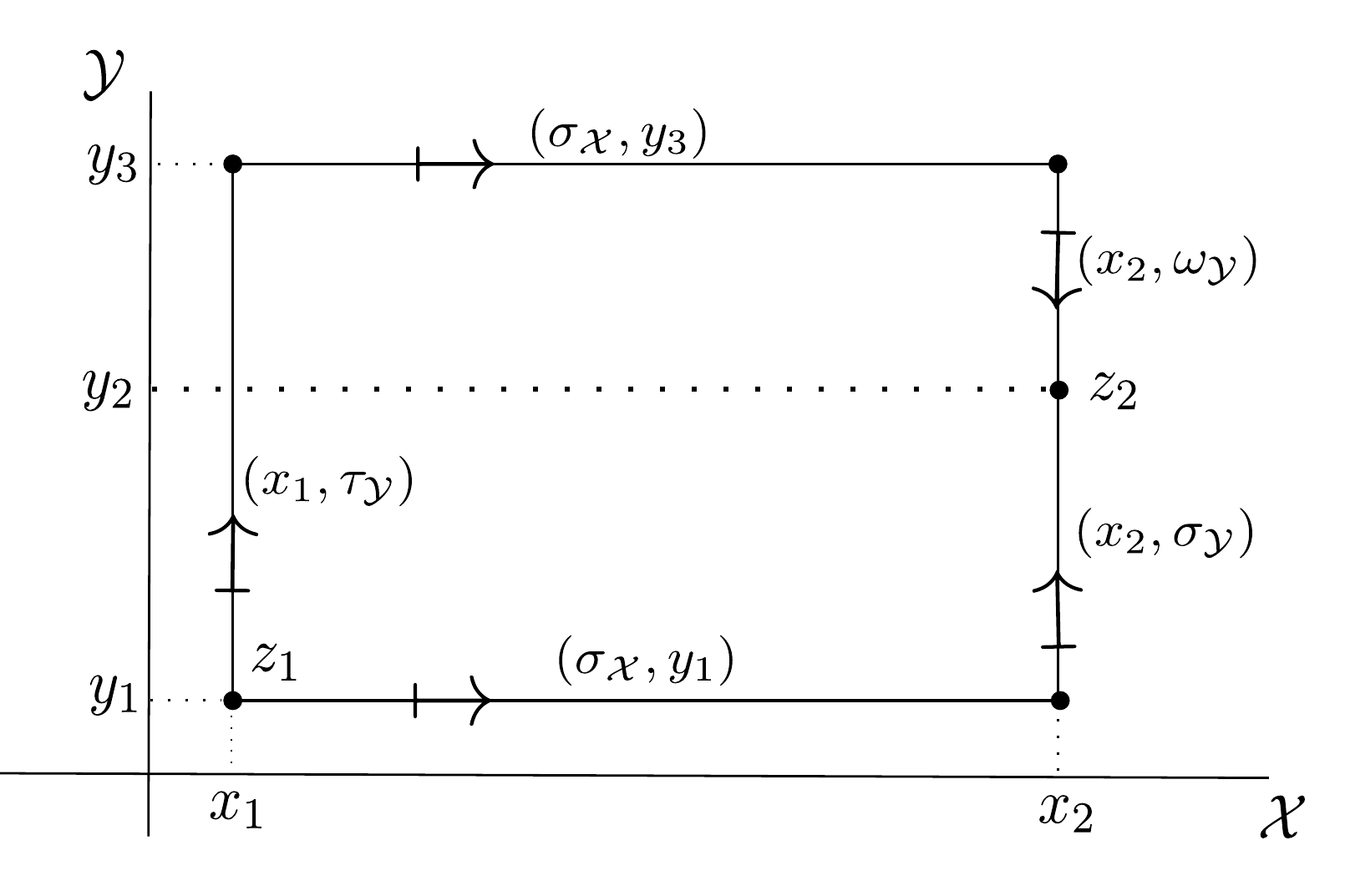}
\caption{The path $\gamma_a=(\sigma_{\mathcal{X}},y_1) \cdot (x_2,\sigma_{\mathcal{Y}})$ is far from the path $\gamma_b= (x_1,\tau_{\mathcal{Y}}) \cdot ( \sigma_{\mathcal{X}}, y_3) \cdot (x_2, \omega_{\mathcal{Y}})$. }
\label{flats}
\end{center}
\end{figure}

\begin{proof}
The proof is easily seen with Figure \ref{flats}, but we provide the written details here. Since we are working only with graphs, all paths will be considered as sequences of adjacent vertices indexed by intervals of integers. Hence, for a path $\gamma:[0,N] \to \mathcal{Z}$, we have $\ell(\gamma|_{[i,j]}) = |j-i|$, where the length of such a path is the number of edges it traverses. In this case, to show that $\gamma$ is a $3$-quasigeodesic, it suffice to show that for any $i \le j$, 
$$\ell(\gamma|_{[i,j]}) \le 3\cdot d_{\mathcal{Z}}(\gamma(i),\gamma(j)). $$
Also, recall that for any two vertices $z_1 = (x_1,y_1)$ and $z_2=(x_2,y_2)$ of $\mathcal{Z}$, the graph metric is
\begin{eqnarray} \label{Zmetric}
d_{\mathcal{Z}}(z_1,z_2) = d_{\mathcal{X}}(x_1,x_2) + d_{\mathcal{Y}}(y_1,y_2). 
\end{eqnarray}

Now, suppose that $d:= d_{\mathcal{X}}(x_1,x_2) \ge d_{\mathcal{Y}}(y_1,y_2)$ and let $\sigma_{\mathcal{X}}$ be a geodesic path in $\mathcal{X}$ that joins $x_1$ to $x_2$. Similarly, let $\sigma_{\mathcal{Y}}$ be a geodesic path in $Y$ that joins $y_1$ to $y_2$. For any $y \in \mathcal{Y}$, we denote by $(\sigma_{\mathcal{X}},y)$ the corresponding geodesic path in $\mathcal{Z}$ whose first coordinate entries are the vertices of $\sigma_{\mathcal {X}}$ and whose second coordinate is $y$. With this notation, let $\gamma_a$ be the path in $\mathcal{Z}$ that is a the concatenation (read from left to right) 
$$\gamma_a = (\sigma_{\mathcal{X}},y_1) \cdot (x_2,\sigma_{\mathcal{Y}}). $$
It is clear that $\gamma_a$ is an path of adjacent vertices of $\mathcal{Z}$ which joins $z_1$ to $z_2$. Moreover, $\gamma_a$ is a geodesic path. This follows from Equation \ref{Zmetric} and the observation that $\gamma_a$ does not backtrack in either coordinate.

We now construct a $3$-quasigeodesic $\gamma_b$ which also joins $z_1$ to $z_2$ but travels far from $\gamma_a$. Let $y_3$ be a vertex of $\mathcal{Y}$ with the property that $d_{\mathcal{Y}}(y_1,y_3) = d_{\mathcal{X}}(x_1,x_2) = d$. Let $\tau_{\mathcal{Y}}$ be a geodesic path in $\mathcal{Y}$ between $y_1$ and $y_3$ and let $\omega_{\mathcal{Y}}$ be a geodesic path in $\mathcal{Y}$ between $y_3$ and $y_2$. Now define $\gamma_b$ to be the path of adjacent vertices of $\mathcal{Z}$ given by
$$\gamma_b = (x_1,\tau_{\mathcal{Y}}) \cdot ( \sigma_{\mathcal{X}}, y_3) \cdot (x_2, \omega_{\mathcal{Y}}).$$
As before, $\gamma_b$ is a path from $z_1$ to $z_2$. Also, since $\gamma_b$ contains the point $(x_1,y_3)$, the Hausdorff distance from $\gamma_b$ to $\gamma_a$ is no less than the distance from $(x_1,y_3)$ to $\gamma_a$. Since,
\begin{eqnarray*}
d_{\mathcal{Z}}((x_1,y_3), \gamma_a) &=& \min\{ d_{\mathcal{Z}}((x_1,y_3),(\sigma_{\mathcal{X}},y_1)), \; \;  d_{\mathcal{Z}}((x_1,y_3),   (x_2,\sigma_{\mathcal{Y}})  \} \\
&\ge& \min\{ d_{\mathcal{Z}}((x_1,y_3),(x_1,y_1)), \; \;  d_{\mathcal{Z}}((x_1,y_3),   (x_2, y_3) ) \} \\
&=& \min \{d_{\mathcal{Y}}(y_3,y_1), \; \; d_{\mathcal{X}}(x_1,x_2)  \}  = d,
\end{eqnarray*}
we have $d_{\mathrm{Haus}(\mathcal{Z})}(\gamma_a,\gamma_b) \ge d = d_{\mathcal{X}}(x_1,x_2)$. It remains to show that $\gamma_b$ is a $3$-quasigeodesic.

As in the construction of $\gamma_a$, both $(x_1,\tau_{\mathcal{Y}}) \cdot ( \sigma_{\mathcal{X}}, y_3)$ and $( \sigma_{\mathcal{X}}, y_3) \cdot (x_2, \omega_{\mathcal{Y}})$ are geodesic subpaths of $\gamma_b$. Hence, let $z_i \in (x_1,\tau_{\mathcal{Y}})$ and let $z_j \in (x_2, \omega_{\mathcal{Y}})$ and note that $d_{\mathcal{Z}}(z_i,z_j) \ge d_{\mathcal{X}}(x_1,x_2)$. Denote by $\gamma_b|[z_i,z_j]$ the portion of $\gamma_b$ between $z_i$ and $z_j$. We compute
\begin{eqnarray*}
\ell(\gamma_b|[z_i,z_j]) &=&  d_{\mathcal{Z}}(z_i, (x_1,y_3)) + d_{\mathcal{Z}}((x_1,y_3),(x_2,y_3)) + d_{\mathcal{Z}}((x_2,y_3), z_j) \\
& \le& 3 \cdot d_{\mathcal{X}}(x_1,x_2) \\
& \le& 3 \cdot d_{\mathcal{Z}}(z_i,z_j).
\end{eqnarray*}
By our remark in the first paragraph of this proof, we are done.
\end{proof}

The following theorem completes the proof of Theorem \ref{intro_main}.
\begin{theorem}\label{stabimplies}
Suppose that $G \le \Mod(S)$ is stable. Then $G$ is convex cocompact.
\end{theorem}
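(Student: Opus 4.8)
The plan is to argue by contraposition: assuming $G$ is \emph{not} convex cocompact, I will produce, for every $R \ge 0$, a pair of uniform-constant quasigeodesics sharing endpoints in $G$ whose Hausdorff distance exceeds $R$, contradicting stability. The excerpt has already laid out exactly this strategy in the paragraph preceding Lemma \ref{quasiinflats}, so my job is to turn that sketch into a rigorous sequence of steps. First I would fix a marking $\mu \in \M(S)$ and invoke the contrapositive of Lemma \ref{cobounded}: since $G$ is undistorted (stability includes undistortedness) but not convex cocompact, the orbit $G \cdot \mu$ cannot be $E$-cobounded for any $E$. Hence for any threshold $E > 0$ there exist $g \in G$ and a proper subsurface $Y \subsetneq S$ with $d_Y(\mu, g\cdot\mu) > E$; set $\nu = g \cdot \mu \in G \cdot \mu$.

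Next I would extract a long ``flat'' region from a hierarchy path. Apply Theorem \ref{hier qg}: choosing $E > M_1$, there is a hierarchy path $H$ from $\mu$ to $\nu$ containing a contiguous subpath $H_Y$ all of whose markings have bases containing $\partial Y$, so $H_Y \subset \Q(\partial Y)$, and whose initial and terminal markings $\alpha_Y, \beta_Y$ satisfy $d_Y(\alpha_Y, \beta_Y) \ge E - 2M_2$. Because $H$ is an $M$-quasigeodesic (Theorem \ref{hier qg}(\ref{hier qg  1})) and subsurface projections are $1$-Lipschitz, the length $|H_Y|$ is bounded below by a linear function of $d_Y(\alpha_Y,\beta_Y)$, hence of $E$; in particular $d_{\M(S)}(\alpha_Y, \beta_Y) \succ E$. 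Now I invoke stability one level up: $\alpha_Y, \beta_Y$ lie on the quasigeodesic $H$ joining two points of $G\cdot\mu$, so by Remark \ref{stab implies qc} (quasiconvexity following from stability) each is within a uniform distance $R'$ of $G \cdot \mu$; choose $\mu_1, \mu_2 \in G\cdot\mu$ with $d_{\M(S)}(\mu_i, \cdot) \le R'$ of $\alpha_Y, \beta_Y$ respectively. Then $\mu_1, \mu_2$ share the feature that $d_Y$ between them is still $\succ E$ (the uniform error from $R'$ and coboundedness on the other factors is absorbed), and $d_{\M(S)}(\mu_1,\mu_2) \succ E$ as well.

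The heart of the argument is then to manufacture two quasigeodesics between $\mu_1$ and $\mu_2$ that are forced apart. The markings $\mu_1, \mu_2$ are a bounded distance from $\Q(\partial Y)$, which by Lemma \ref{BM prod} is $P$-quasi-isometric to a product $\prod_{Z \in \sigma(\partial Y)} \M(Z)$. Since $Y \in \sigma(\partial Y)$ is a genuine nonpants factor and $\mu_1,\mu_2$ differ by $\succ E$ in the $\C(Y)$-coordinate, I apply Lemma \ref{quasiinflats} to the product (with $\X = \M(Y)$ and $\Y = \prod_{Z \ne Y}\M(Z)$) to obtain two $3$-quasigeodesics in the product whose Hausdorff distance is $\ge d_Y(\mu_1,\mu_2) \succ E$. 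Pulling these back through the $P$-quasi-isometry and composing with the inclusion $\Q(\partial Y) \hookrightarrow \M(S)$ — which is a quasi-isometric embedding, again by the distance formula — yields $L_0$-quasigeodesics in $\M(S)$ with $L_0$ depending only on $S$, sharing endpoints $\mu_1,\mu_2 \in G\cdot\mu$, whose Hausdorff distance in $\M(S)$ is $\succ E$. Since $E$ was arbitrary, this defeats stability of $G \cdot \mu$ and hence of $G$, completing the contradiction.

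The main obstacle I anticipate is the bookkeeping in the middle step: verifying that replacing $\alpha_Y, \beta_Y$ by nearby orbit points $\mu_1, \mu_2$ does not destroy the large $\C(Y)$-separation, and that the product region $\Q(\partial Y)$ embeds quasi-isometrically into $\M(S)$ with constants independent of $Y$ and $E$. Both rely on the distance formula and the uniform coboundedness of hierarchy-path markings (Theorem \ref{hier qg}(\ref{hier qg  4})), but the estimates must be arranged so that every constant except $E$ depends only on $S$ (and the orbit-map quasi-isometry constant), so that letting $E \to \infty$ genuinely contradicts the existence of a single stability gauge $R = R(L_0)$. A secondary technical point is checking that the inclusion $\Q(\partial Y) \hookrightarrow \M(S)$ is a quasi-isometric embedding uniformly, and that quasigeodesics in the product map to quasigeodesics in $\M(S)$; I expect this to follow cleanly from Lemma \ref{BM prod} and Theorem \ref{distance}, but it is the place where a careless constant could creep in.
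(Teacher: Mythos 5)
Your proposal follows essentially the same route as the paper's proof: the contrapositive of Lemma \ref{cobounded} to produce a large subsurface projection, Theorem \ref{hier qg}(\ref{hier qg  3}) to extract the subpath $H_Y \subset \Q(\partial Y)$, Lemma \ref{BM prod} together with Lemma \ref{quasiinflats} to manufacture two uniform-constant quasigeodesics that are forced far apart, and stability (via Remark \ref{stab implies qc}) to anchor their endpoints on the orbit $G\cdot\mu$. The only bookkeeping difference is that the paper constructs the flat quasigeodesics between $\alpha_Y,\beta_Y \in \Q(\partial Y)$ themselves and then appends segments of length at most $A_1$ to reach $\mu_1,\mu_2 \in G\cdot\mu$, rather than working between $\mu_1,\mu_2$ directly (which need not lie in $\Q(\partial Y)$); with that small adjustment your outline is the paper's argument.
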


\begin{proof}
Assume towards a contradiction that $G \le \Mod(S)$ is stable but not convex cocompact. For a fixed $\mu \in \M(S)$, the orbit map $G \to \M(S)$ given by $g \mapsto g \cdot \mu$ is a $K$-quasi-isometric embedding for some $K\ge 1$. By Proposition \ref{preserved}, we have that $G\cdot \mu$ is stable in $\M(S)$.

Let $L = \max\{K,M\} $, where $M$ is the quasigeodesic constant for a hierarchy path (Theorem \ref{hier qg} (\ref{hier qg 1})), and set $A_1 = R(L)$, the stability constant for $L$-quasigeodesics in $\M(S)$ which begin and end on $G \cdot \mu$. Since any two markings in the orbit $G \cdot \mu$ are joined by both hierarchy paths and $K$-quasigeodesics which are contained in $G \cdot \mu$, any hierarchy path between markings in $G \cdot \mu$ is contained in the $A_1$-neighborhood of $G \cdot \mu$ (see Remark \ref{stab implies qc}).  Finally, set $A_2 = R(3P^2+2A_1)$, where $P$ is as in Theorem \ref{BM prod}. 

Since $G$ is undistorted but not convex cocompact, Proposition \ref{cobounded} implies that for any $E \ge 0$ there is a $g \in G$ and a proper subsurface $Y \subset S$ such that 
\begin{eqnarray*}
d_{Y}(\mu, g \cdot \mu) \ge E.
\end{eqnarray*}
For $E \ge M_1$, Theorem \ref{hier qg} (\ref{hier qg 3}) implies that any hierarchy path $H: [0,N] \to \M(S)$ with $H(0) =\mu$ and $H(N)=g \cdot \mu$ contains a subpath $H_Y$ such that $\partial Y \subset \mathrm{base}(\mu)$ for each $\mu \in H_Y$, i.e. $H_Y \subset \Q(\partial Y)$.  If we denote the initial marking and terminal markings of $H_Y$ by $\alpha_Y$ and $\beta_Y$, respectively, then
\begin{eqnarray}\label{Ymarkingdistance}
d_{\M(Y)}(\alpha_Y,\beta_Y) \ge \frac{1}{4} d_{Y}(\alpha_Y,\beta_Y) \ge \frac{1}{4}(E - 2M_2)
\end{eqnarray}
with the last inequality in (\ref{Ymarkingdistance}) following from Theorem \ref{hier qg} (\ref{hier qg 3}).  Set $E' = \frac{1}{4}(E - 2M_2)$.

By Lemma \ref{BM prod}, $\Q(\partial Y)$ is $P$-quasi-isometric to $\prod_{X \subset \sigma(\partial Y)} \M(X)$. To apply Lemma \ref{quasiinflats}, set $\mathcal{Y} = \M(Y)$ and $\mathcal{X} = \prod_{X \subset \sigma(\partial Y)\setminus \{Y\}} \M(X)$. Then 
$$\mathcal{Z} = \mathcal{X} \times \mathcal{Y} =  \prod_{X \subset \sigma(\partial Y)} \M(X).$$

Since $\alpha_Y, \beta_Y \in \Q(\partial Y)$, we may use the correspondence of Lemma \ref{BM prod} to view $\alpha_Y$ and $\beta_Y$ in the product space $Z$. Equation (\ref{Ymarkingdistance}) implies that $d_{\mathcal{Y}}(\alpha_Y, \beta_Y) \ge E'$, where $d_{\mathcal{Y}}(\alpha_Y, \beta_Y)$ is just the distance between $\alpha_Y$ and $\beta_Y$ in $\M(Y)$. Lemma \ref{quasiinflats} implies that there exists $3$-quasigeodesics $\gamma^1$ and $\gamma^2$ in $\mathcal{Z}$ that join the markings $\alpha_Y$ and $\beta_Y$ in $\mathcal{Z}$ and whose Hausdorff distance in $\mathcal{Z}$ is greater than or equal to $E'$.

Using the $P$-quasi-isometry in Theorem \ref{BM prod}, we may view $\gamma^1$ and $\gamma^2$ as $3P^2$-quasigeodesics in $\M(S)$ that join the markings $\alpha_Y$ and $\beta_Y$. Measuring Hausdorff distance in $\M(S)$, we have
\begin{eqnarray}\label{bigHaus}
d_{\mathrm{Haus}(\M(S))}(\gamma^1,\gamma^2) \ge \frac{E'-P}{P}. 
\end{eqnarray}

Since the original hierarchy path $H$ joins markings in $G \cdot \mu$, it is contained in an $A_1$-neighborhood of the orbit $G\cdot \mu$. Hence, there are markings $\mu_1, \mu_2 \in G \cdot \mu$ so that 
\begin{eqnarray}
d_{\M(S)}(\alpha_Y, \mu_1) \le A_1  \quad \text{and} \quad  d_{\M(S)}(\beta_Y, \mu_2) \le A_1,
\end{eqnarray}
where $A_1$ depends only on $S$, as above. By appending initial and terminal geodesic segments of length no more than $A_1$ to $\gamma^1$ and $\gamma^2$, we may consider these paths as $(3P^2+2A_1)$-quasigeodesics in $\M(S)$ that join the orbit points $\mu_1, \mu_2 \in G\cdot \mu$. By our choice of $A_2 = R(3P^2+2A_1)$, the stability constant for $(3P^2+2A_1)$, any two $(3P^2+2A_1)$-quasigeodesics between markings in $G \cdot \mu$ have Hausdorff distance no greater than $A_2$.

Since $E \ge 0$ was arbitrary, we may choose $E > 4(PA_2 +P)+2M_2$ so that 
$$E' = \frac{1}{4}(E - 2M_2) > PA_2 +P.$$
Then by Equation \ref{bigHaus}
\begin{eqnarray*}
d_{\mathrm{Haus}(\M(S))}(\gamma^1,\gamma^2) \ge \frac{E'-P}{P} > A_2.
\end{eqnarray*}

This contradicts the assertion that the $(3P^2+2A_1)$-quasigeodesics $\gamma^1$ and $\gamma^2$ between $\mu_1 ,\mu_2 \in G \cdot \mu$ must be $A_2$-Hausdorff close. We conclude that stable subgroups of $\Mod(S)$ are convex cocompact, as required.
\end{proof}

\bibliographystyle{amsalpha}
\bibliography{StabMap.bbl}

\providecommand{\bysame}{\leavevmode\hbox to3em{\hrulefill}\thinspace}
\providecommand{\MR}{\relax\ifhmode\unskip\space\fi MR }
\providecommand{\MRhref}[2]{%
  \href{http://www.ams.org/mathscinet-getitem?mr=#1}{#2}
}
\providecommand{\href}[2]{#2}
\begin{thebibliography}{BMM11}

\bibitem[Beh06]{Be}
Jason Behrstock, \emph{Asymptotic geometry of the mapping class group and
  {T}eichm\"uller space}, Geom. Topol. \textbf{10} (2006), 1523--1578.

\bibitem[BF06]{BF}
Jeffrey Brock and Benson Farb, \emph{Curvature and rank of {T}eichm{\"u}ller
  space}, American Journal of Mathematics (2006), 1--22.

\bibitem[BH09]{BH}
M.R. Bridson and A.~Haefliger, \emph{Metric spaces of non-positive curvature},
  vol. 319, Springer, 2009.

\bibitem[BM08]{BMin}
Jason~A Behrstock and Yair~N Minsky, \emph{Dimension and rank for mapping class
  groups}, Annals of Mathematics \textbf{167} (2008), 1055--1077.

\bibitem[BMM11]{BMMII}
Jeffrey Brock, Howard Masur, and Yair Minsky, \emph{Asymptotics of
  weil--petersson geodesics ii: bounded geometry and unbounded entropy},
  Geometric and Functional Analysis \textbf{21} (2011), no.~4, 820--850.

\bibitem[DMS10]{DMS}
Cornelia Dru{\c{t}}u, Shahar Mozes, and Mark Sapir, \emph{Divergence in
  lattices in semisimple {L}ie groups and graphs of groups}, Transactions of
  the American Mathematical Society \textbf{362} (2010), no.~5, 2451--2505.

\bibitem[DR09]{DuRa}
Moon Duchin and Kasra Rafi, \emph{Divergence of geodesics in {T}eichm{\"u}ller
  space and the mapping class group}, Geometric and Functional Analysis
  \textbf{19} (2009), no.~3, 722--742.

\bibitem[Far06]{Farbproblems}
Benson Farb, \emph{Some problems on mapping class groups and moduli space},
  Proceedings of symposia in pure mathematics, vol.~74, Providence, RI;
  American Mathematical Society; 1998, 2006, p.~11.

\bibitem[FM02]{FMo}
Benson Farb and Lee Mosher, \emph{Convex cocompact subgroups of mapping class
  groups}, Geom. Topol. \textbf{6} (2002), 91--152 (electronic).

\bibitem[FM12]{FM}
Benson Farb and Dan Margalit, \emph{A primer on mapping class groups},
  Princeton Mathematical Series, vol.~49, Princeton University Press,
  Princeton, NJ, 2012.

\bibitem[Ham05]{H}
Ursula Hamenst{\"a}dt, \emph{Word hyperbolic extensions of surface groups},
  preprint (2005), arXiv:0807.4891v2.

\bibitem[Har81]{Ha}
W.~J. Harvey, \emph{Boundary structure of the modular group}, 245--251.

\bibitem[KL08]{KL2}
Richard~P. {Kent, IV} and Christopher~J. Leininger, \emph{Shadows of mapping
  class groups: capturing convex cocompactness}, Geom. Funct. Anal. \textbf{18}
  (2008), no.~4, 1270--1325.

\bibitem[Min10]{Mi}
Yair~N. Minsky, \emph{The classification of {K}leinian surface groups, {I}:
  {M}odels and bounds}, Annals of Mathematics \textbf{171} (2010), 1--107.

\bibitem[MM99]{MM1}
Howard~A. Masur and Yair~N. Minsky, \emph{Geometry of the complex of curves.
  {I}. {H}yperbolicity}, Invent. Math. \textbf{138} (1999), no.~1, 103--149.

\bibitem[MM00]{MM2}
\bysame, \emph{Geometry of the complex of curves. {II}. {H}ierarchical
  structure}, Geom. Funct. Anal. \textbf{10} (2000), no.~4, 902--974.

\bibitem[Raf10]{Rafihyp}
Kasra Rafi, \emph{Hyperbolicity in {T}eichm{\"u}ller space}, arXiv preprint
  arXiv:1011.6004 (2010).

\bibitem[RS09]{RScovers}
Kasra Rafi and Saul Schleimer, \emph{Covers and the curve complex}, Geometry \&
  Topology \textbf{13} (2009), 2141--2162.

\bibitem[Sis11]{Sistocont}
Alessandro Sisto, \emph{Contracting elements and random walks}, arXiv preprint
  arXiv:1112.2666 (2011).

\end{thebibliography}
\end{document}